\title[Regularizations of residue currents]{Regularizations of residue currents}
\author{Jan-Erik Bj\"{o}rk \& H\aa kan Samuelsson}
\thanks{The second author was partially supported by a Post Doctoral Fellowship from the Swedish Research Council.}
\address{J.-E. Bj\"{o}rk, Department of Mathematics, Stockholm University, SE-106 91 Stockholm, Sweden}
\email{jeb@math.su.se}
\address{H. Samuelsson, Department of Mathematical Sciences, Division of Mathematics, University of Gothenburg and 
Chalmers University of Technology, SE-412 96 G\"{o}teborg, Sweden}
\email{hasam@math.chalmers.se}
\newtheorem{proposition}{Proposition}
\newtheorem{theorem}[proposition]{Theorem}
\newtheorem{lemma}[proposition]{Lemma}
\newtheorem{corollary}[proposition]{Corollary}
\theoremstyle{definition}
\newtheorem{definition}[proposition]{Definition}
\newtheorem{remark}[proposition]{Remark}
\newcommand{\C}{\mathbb{C}}
\newcommand{\debar}{\bar{\partial}}
\newcommand{\CH}{\textrm{{\bf CH}}}
\newcommand{\B}{\mathbb{B}}
\def\newop#1{\expandafter\def\csname #1\endcsname{\mathop{\rm #1}\nolimits}}
\begin{document}
\nocite{*}
\bibliographystyle{plain}

\begin{abstract}
Under assumptions about complete intersection, we prove that Coleff-Herrera type currents
satisfy a robust calculus in the sense that natural regularizations of such currents can be 
multiplied to yield regularizations of the Coleff-Herrera product of the currents.
\end{abstract}

\maketitle
\thispagestyle{empty}

\section{Introduction}
Let $f$ be a holomorphic function defined on the unit ball $\B\subset \C^n$. Then $1/f$ exists as a 
principal value distribution, or rather as a $(0,0)$-current, on $\B$, i.e., 
\begin{equation*}
\lim_{\epsilon \to 0^+}\int_{\{|f|>\epsilon\}}\varphi/f
\end{equation*}
exists for $\varphi\in \mathscr{D}_{n,n}(\B)$ and defines a continuous functional on $\mathscr{D}_{n,n}(\B)$.
This was first proved by Herrera-Lieberman, \cite{HL}, using Hironaka's theorem on resolutions of singularities. In fact,
by Hironaka's theorem one may assume that $f$ is a monomial, and in that case it is possible to compute the limit by
hand. The proof also shows that one may take the limit of integrals over $\{|\tilde{f}|>\epsilon\}$, where
$\tilde{f}$ is any holomorphic function such that $\tilde{f}^{-1}(0)\supseteq f^{-1}(0)$.
The current $1/f$ is obviously closely related to division problems; if $h$ is holomorphic then $h/f$ is at least a 
current, and it is holomorphic if and only if it is $\debar$-closed, i.e., if and only if $0=\debar(h/f)=h\debar(1/f)$.
Hence, $h$ is in the ideal, $\langle f\rangle$, generated by $f$ if and only if $h$ annihilates 
the current $\debar(1/f)$. This current 
clearly has support on $Z_f=f^{-1}(0)$ and it is related to Lelong's
integration current $[Z_f]$, see \cite{Lelong}, by the Poincar\`{e}-Lelong formula: $2\pi i [Z_f]=\debar (1/f)\wedge df$.
The current $\debar (1/f)$ is called the residue current associated to $f$ and it is thus 
an analytic object that describes the algebraic-geometric object $\langle f\rangle$. 

Now, let $V$ be a pure $n$-dimensional analytic subset of a complex $N$-dimensional 
manifold $X$ and let $f\colon X\to \C$ be a holomorphic
function such that $V\setminus f^{-1}(0)$ is a dense subset of $V_{reg}$. Then the principal value of $1/f$ exists on 
$V$, i.e.,
\begin{equation}\label{hV}
\lim_{\epsilon \to 0^+}\int_{V\cap\{|\tilde{f}|>\epsilon\}} \varphi /f
\end{equation} 
exists for $\varphi\in \mathscr{D}_{n,n}(X)$ and holomorphic $\tilde{f}$ with $\tilde{f}^{-1}(0)\supseteq f^{-1}(0)$
and yields a well defined current denoted $(1/f)[V]$.
The existence of this limit follows from the case $V=\B$ by
Hironaka's theorem. A sheaf of currents on $X$ supported on $V$ is then 
obtained by applying holomorphic differential operators to such currents. This sheaf is (equivalent to)
the sheaf $\CH_V[*S]$, see Definition \ref{CHdef}, and it is this kind of currents we will consider in this paper.
The kernel of $\debar$ in $\CH_V[*S]$ is denoted $\CH_V$ and is actually sufficiently ample  
to represent moderate cohomology in the sense that $\CH_V \simeq \mathcal{H}_{[V]}^P(\mathcal{O}_X)$,
see \cite{DS}; here $P=N-n$ is the codimension of $V$. The notation $\CH$ refers to Coleff-Herrera type currents.

\smallskip

Let us return to the case $V=\B\subset \C^n$ and consider a holomorphic mapping
$f=(f_1,\ldots,f_p)\colon \B \to \C^p$. To find a current that describes the ideal, $\langle f\rangle$,
generated by $f_1,\ldots,f_p$ it is tempting to try to define the product 
$\debar (1/f_1)\wedge \cdots \wedge \debar (1/f_p)$. If $f$ defines a complete intersection, i.e., 
$f^{-1}(0)$ has codimension $p$, it is possible to give a well defined meaning to this product. 
This was first done by Coleff-Herrera, \cite{CH}, as follows.
Let $\varphi \in \mathscr{D}_{n,n-p}(\B)$ and put
\begin{equation*}
I_f^{\varphi}(\epsilon):=\int_{\cap \{|f_j|^2=\epsilon_j\}} \varphi /(f_1\cdots f_p).
\end{equation*}
Coleff-Herrera proved that the limit of $I_f^{\varphi}(\epsilon)$ as $\epsilon \to 0$
along any ``admissible path'' exists and defines a current of bidegree $(0,p)$, denoted
$\debar (1/f_1)\wedge \cdots \wedge \debar (1/f_p)$ or $R^f$ for short, that is alternating
with respect to the ordering of the tuple $f$.
Admissible path here means that $\epsilon \to 0$ along a path in the first orthant such that
$\epsilon_j/\epsilon_{j-1}^{k}\rightarrow 0$ for $j=2,\ldots,p$ and all
$k\in \mathbb{N}$. It was later proved by Dickenstein-Sessa, \cite{DS}, and Passare, \cite{PDr}, 
independently that $R^f$
describes the ideal $\langle f\rangle$ in the sense that its annihilator is precisely $\langle f\rangle$.
We remark that even if $f$ does not define a complete intersection, the limit of 
$I_f^{\varphi}(\epsilon)$ along an admissible path exists but does not yield a well defined current
associated to $f$ as one easily sees from the simple case $f_1=z^2$, $f_2=zw$. Currents describing
general ideals have recently been defined by Andersson-Wulcan, \cite{AW}; see also 
Section \ref{bevissektion} below.

From now on we stick to the (generic) case that $f$ defines a complete intersection.
The first question raised by Coleff-Herrera in \cite{CH} is whether it is necessary to take limits along 
admissible paths or not. It turned out to be necessary; Passare-Tsikh, \cite{PTex}, showed that if 
$f=(z^4,z^2+w^2+z^3)$ then $I_f^{\varphi}(\epsilon)$ does not have an unrestricted limit 
as $\epsilon\to 0$ (for all $\varphi$). A generic family of examples with this property was later found
by the first author; even examples with $I_f^{\varphi}(\epsilon)\to \infty$ along certain paths are
constructed, see, e.g., \cite{JEBabel}; see also Pavlova, \cite{pavlova}. 
However, our main theorem implies that the mild average of 
$I_f^{\varphi}(\epsilon)$,
\begin{equation*}
\mathcal{I}_f^{\varphi}(\epsilon):=
\int_{s\in [0,\infty)^p} I_f^{\varphi}(s) \,d\chi_1(s_1/\epsilon_1)\wedge \cdots
\wedge d\chi_p(s_p/\epsilon_p),
\end{equation*}
where $\chi_j\in C^{\infty}([0,\infty])$, $\chi_j(0)=0$, and $\chi_j(\infty)=1$, depends H\"{o}lder
continuously on $\epsilon \in [0,\infty)^p$ and tends to the Coleff-Herrera product $R^f$ as $\epsilon\to 0$.
In fact, we prove
\begin{theorem}\label{huvudsats}
Let $X$ be a complex $N$-dimensional manifold, $V\subseteq X$ an analytic subset of pure dimension $n$, and 
$f=(f_1,\ldots,f_q)\colon X\to \C^q$ a holomorphic mapping such that $(f_1,\ldots,f_p,f_j)$ locally
defines a complete intersection on $V$ for $p+1 \leq j \leq q$. 
Let also $\chi_j$, $1\leq j \leq q$, be smooth on $[0,\infty]$, vanish to order $\ell_j$ at $0$ 
and $\chi_j(\infty)=1$.
Then for any $\mu \in \CH_V$ and $\varphi\in \mathscr{D}_{N,n-p}(X)$ we have
\begin{equation*}
\Big| \frac{\debar \chi_1^{\epsilon}\wedge \cdots \wedge \debar \chi_p^{\epsilon}
\chi_{p+1}^{\epsilon}\cdots \chi_q^{\epsilon}}{f_1^{\ell_1}\cdots f_q^{\ell_q}}\wedge \mu. \varphi \hspace{5cm}
\end{equation*} 
\begin{equation*}
\hspace{3cm} -\debar\frac{1}{f_1^{\ell_1}}\wedge \cdots \wedge \debar\frac{1}{f_p^{\ell_p}}
\frac{1}{f_{p+1}^{\ell_{p+1}}\cdots f_q^{\ell_q}}
\wedge \mu. \varphi \Big| \leq C \|\varphi\|_M \epsilon^{\omega},
\end{equation*}
where $\chi_j^{\epsilon}=\chi_j(|f_j|^2/\epsilon_j)$, and $M$ and $\omega$ are positive constants that
only depend on $f$ and $\textrm{Supp}(\varphi)$, while the positive constant $C$ also might depend
on the $C^M$-norm of the $\chi_j$-functions.
\end{theorem}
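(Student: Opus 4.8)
The plan is to reduce, via Hironaka's theorem, to an explicit one-variable model and then estimate directly. First I would localize: since $\varphi$ and $\mu$ have the relevant compact support it suffices to prove the estimate in a fixed coordinate chart. A preliminary observation is that the left-hand object is a genuine bounded current for every $\epsilon$: because each $\chi_j$ vanishes to order $\ell_j$ at $0$, one has $\chi_j(t)=O(t^{\ell_j})$ and $\chi_j'(t)=O(t^{\ell_j-1})$, so both $\chi_j^{\epsilon}/f_j^{\ell_j}=O(\bar f_j^{\ell_j})$ and $\debar\chi_j^{\epsilon}/f_j^{\ell_j}=O(\bar f_j^{\ell_j-1})\,\overline{\partial f_j}/\epsilon_j$ are in fact smooth. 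Thus the whole difference in the statement is a pairing of smooth data with $\mu$ tested against $\varphi$.

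Next I would apply a resolution of singularities $\pi\colon\tilde X\to X$ that simultaneously resolves $V$ and turns $f_1\cdots f_q$ into a normal-crossings divisor, so that in local coordinates $t=(t_1,\dots)$ on $\tilde X$ one has $f_j\circ\pi=u_j\,t^{a_j}$ with $u_j$ nonvanishing and $t^{a_j}$ a monomial. Pulling everything back, the Coleff--Herrera current $\mu$ takes a tensor-product form in these coordinates (integration along the strict transform of $V$, together with smooth tangential data, times one-variable principal-value/residue factors), and both terms of the asserted inequality become integrals on $\tilde X$ that factorize over the coordinate directions. This reduces the estimate to products of one-variable integrals in which each $f_j$ has been replaced by a monomial.

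The analytic heart is then a one-variable estimate. For a single coordinate $t$, a monomial $t^a$, and $\chi$ vanishing to order $\ell$ at $0$, I would compare $\int\frac{\debar\chi(|t^a|^2/\epsilon)}{t^{a\ell}}\wedge\psi$ with $\langle\debar(1/t^{a\ell}),\psi\rangle$ for smooth test data $\psi$. Writing $\debar\chi(|t^a|^2/\epsilon)=\chi'(|t^a|^2/\epsilon)\,\overline{\partial(t^a)}\,t^a/\epsilon$ and rescaling $t\mapsto\epsilon^{1/(2a)}t$ concentrates the difference on the shrinking set $\{|t|^{2a}\lesssim\epsilon\}$; a Taylor expansion of $\psi$ at the origin then yields a bound $O(\epsilon^{\omega})$ with $\omega>0$ determined by $a$ and $\ell$, where only finitely many derivatives of $\psi$ (hence of $\varphi$) enter and the constant involves a $C^M$-norm of $\chi$. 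The analogous, simpler statement for the factors without $\debar$, namely $\chi(|t^a|^2/\epsilon)/t^{a\ell}\to 1/t^{a\ell}$ in the principal-value sense with the same type of rate, handles the indices $p+1\le j\le q$. It is precisely the built-in averaging of the smooth profile $\chi$ that makes these one-variable limits exist with a rate, in contrast to the unrestricted limit of $I_f^{\varphi}(\epsilon)$.

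The main obstacle is the multivariable assembly. To pass from the one-variable estimates to the full inequality I would telescope: write the difference of the two $q$-fold products as a sum of terms in which exactly one factor is a ``difference factor'', bounded by the one-variable estimate, and all remaining factors are either regularizations $\chi_k^{\epsilon}/f_k^{\ell_k}$, $\debar\chi_k^{\epsilon}/f_k^{\ell_k}$ or their limits, which must be bounded uniformly in $\epsilon$. Establishing this uniform boundedness is exactly where the hypothesis that $(f_1,\dots,f_p,f_j)$ defines a complete intersection on $V$ enters: it guarantees that on the resolution the monomials attached to the residue directions $f_1,\dots,f_p$ together with each $f_j$ meet $V$ in the expected codimension, so that no two residue or principal-value factors accumulate on the same coordinate and the products do not blow up --- this is precisely the phenomenon that fails for $f_1=z^2,\ f_2=zw$. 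Under the hypothesis each telescoped term then carries at least one factor $\epsilon_j^{\omega_j}$ and is otherwise controlled by $\|\varphi\|_M$, and summing over the finitely many telescoped terms and the finitely many charts of the resolution gives the stated bound $C\|\varphi\|_M\epsilon^{\omega}$.
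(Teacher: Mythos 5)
Your overall architecture --- resolve to normal crossings, prove a rate for replacing one factor at a time, and telescope --- resembles the paper's strategy at a distance, but two of your key steps do not survive scrutiny. First, the reduction to ``products of one-variable integrals'' is not available: after resolution the $f_j$ pull back to $u_jx^{\alpha(j)}$ where several monomials share the same coordinates (the exponent vectors $\alpha(1),\ldots,\alpha(p)$ are typically linearly dependent and the units $u_j$ cannot all be normalized away simultaneously). The integrand therefore does not factor over coordinate directions; this is exactly the resonance phenomenon behind the Passare--Tsikh example, and handling it requires the smoothness of the $\chi_j$ via a Taylor-type expansion of the full multivariable amplitude in the smoothing parameters $t_j=|x^{\alpha(j)}|^2/\epsilon_j$ (the paper's expansion \eqref{super6}, combined with the exterior-algebra cancellation $d\bar{x}^{\alpha(1)}\wedge\cdots\wedge d\bar{x}^{\alpha(m)}\wedge d\bar{x}^{\alpha(j)}=0$), not a one-variable rescaling.

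Second, and more seriously, your telescoping cannot be closed chart by chart on the resolution. Once one factor is replaced by its limit (say $\chi_q^{\epsilon}/f_q^{\ell_q}$ by $1/f_q^{\ell_q}$), you must still show that the remaining $(q-1)$-parameter product converges unrestrictedly against the more singular current $(1/f_q^{\ell_q})\mu$; asserting that the remaining factors are ``bounded uniformly'' does not give this, and the paper's Section 2 example ($f=(z_1,z_2,z_3)$, blow-up along the $z_3$-axis) shows that the individual chart contributions to such a partially evaluated product genuinely fail to have unrestricted limits --- only their global sum does. What is missing is the global mechanism that reduces the number of parameters: the identity $\debar(1/f)\wedge\nu=\debar((1/f)\nu)-(1/f)\debar\nu$ together with the fact that $\debar$ maps $\CH_V[*S]$ into $\CH_{V\cap S}$ (Proposition \ref{destreckprop}), which permits an integration by parts on the base and an induction over $p$ and $q$. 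Relatedly, your account of where the complete-intersection hypothesis enters (``no two factors accumulate on the same coordinate'') is not the actual mechanism: residue factors do share coordinates on the resolution; the hypothesis is instead used to show that antiholomorphic test forms have vanishing pullback to the too-low-dimensional sets $\{f_i=f_j=0\}\cap V$, which is the content of the division lemma (Lemma \ref{divlemma}) needed to make the one-factor-at-a-time estimate (Proposition \ref{nyckelprop}) uniform in the remaining $\epsilon_j$.
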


% \begin{remark}
% By a statement ``$(g_1,\ldots,g_k)$ defines a complete intersection on $V$'' we mean that 
% $V\cap \{g_{i_1}=\cdots =g_{i_{\ell}}=0\}$ has codimension $P+\ell$ if $P$ is the codimension of $V$ and
% $1\leq i_1 < \cdots < i_{\ell} \leq k$. 
% \end{remark}
% \marginpar{Racker det att anta att $V\cap \{g_1=\cdots =g_k=0\}$ har kodim $P+k$ som i fallet 
% da $V$ ar mfld?}

% \bigskip

Let $V=X$, $\mu=1$, $\ell_j=1$ and let the $\chi_j$ be smooth regularizations of the characteristic function 
of $[1,\infty)$; by this we mean that $\chi_j$ is a smooth increasing function on $[0,\infty)$
that is $0$ close to $0$ and $1$ close to $\infty$. The theorem implies that the smooth form
\begin{equation}\label{passareform}
\debar\frac{\chi_1^{\epsilon}}{f_1}\wedge \cdots \wedge \debar\frac{\chi_p^{\epsilon}}{f_p}\cdot
\frac{\chi_{p+1}^{\epsilon}}{f_{p+1}} \cdots \frac{\chi_q^{\epsilon}}{f_q}
\end{equation}
converges unrestrictedly to the mixed residue and principal value current
\begin{equation}\label{CHP}
\debar \frac{1}{f_1} \wedge \cdots \wedge \debar\frac{1}{f_p}\cdot \frac{1}{f_{p+1}}\cdots \frac{1}{f_q}
\end{equation}
introduced by Coleff-Herrera, \cite{CH}, and Passare, \cite{PCrelle}.
It is proved in \cite{PCrelle} that if $\epsilon_j=\delta^{s_j}$, then \eqref{passareform}
has a limit, independent of $s=(s_1,\ldots,s_q)\in \mathbb{R}_{+}^q$, as $\delta \to 0^+$ as long as $s$ avoids finitely
many hyperplanes $H_a=\{t\in \mathbb{R}_{+}^q; t\cdot a=0\}$, $a\in \mathbb{N}^q$; we say that $\epsilon \to 0$
inside a Passare sector. Our result is thus a sharpening and a generalization of Passare's result and shows that 
there is a robust calculus for Coleff-Herrera type currents. In particular, we have the appealing
formula
\begin{equation*}
R^f. \varphi=\lim_{\epsilon\to 0} \int_X
\debar\frac{\bar{f}_1}{|f_1|^2+\epsilon_1}\wedge \cdots \wedge \debar\frac{\bar{f}_p}{|f_p|^2+\epsilon_p}\wedge \varphi,
\quad \varphi \in \mathscr{D}_{N,N-p}(X),
\end{equation*}
which follows by taking $\chi_j(t)=t/(t+1)$.

\bigskip

Another approach to the Coleff-Herrera product, $R^f$, is based on analytic continuation of currents,
a technique with roots in the works of Atiyah, \cite{Atiyah}, and Gelfand-Shilov, \cite{GS}. 
In the context of residue currents, it has been developed
by several authors, e.g., Barlet-Maire, \cite{BM}, Yger, \cite{Y}, Passare-Tsikh, \cite{PTcanada}, Berenstein-Gay-Yger, 
\cite{BGY}, and by the second author in the recent paper \cite{hasamArkiv}. 
Computing the Mellin transform of the integral in \eqref{hV} (considered as a function of $\epsilon$)
one obtains
\begin{equation}\label{analytisk1}
\int_V |\tilde{f}|^{2\lambda} \varphi /f
\end{equation}
if $\mathfrak{Re}\, \lambda >> 1$. One can show, either by using a Bernstein-Sato functional equation or by
computing directly in a resolution of $V$ where $f^{-1}(0)$ has normal crossings, that \eqref{analytisk1} (as a 
function of $\lambda$) has a meromorphic continuation to all of $\C$ and that its poles are contained in 
an arithmetic progression $\{-s-\mathbb{N}\}$, $s\in \mathbb{Q}_+$. It is thus analytic in a neighborhood of 
the origin, and moreover, its value there defines the action of a current. This current is the current 
$(1/f)[V]$, as one easily shows in a resolution.  

The Coleff-Herrera-Passare current \eqref{CHP} can be obtained in a similar manner; consider the function
\begin{equation}\label{analytisk2}
\lambda \mapsto
\int_X \frac{\debar |f_1|^{2\lambda_1} \wedge \cdots \wedge \debar |f_p|^{2\lambda_p}
|f_{p+1}|^{2\lambda_{p+1}}\cdots |f_q|^{2\lambda_q}}{f_1\cdots f_q}\wedge \varphi,
\end{equation} 
where $f$ defines a complete intersection on $X$ and $\mathfrak{Re}\, \lambda_j >> 1$. 
One can similarly show that it has a meromorphic extension to 
$\C^{q}$. It was recently showed by the second author in \cite{hasamArkiv} that it actually is
analytic in a neighborhood of $\cap_j \{\mathfrak{Re}\, \lambda_j \geq 0\}$. By results of Yger, 
it was known before that the restriction of \eqref{analytisk2} 
to any complex line of the form 
$\{\lambda=(t_1z,\ldots,t_qz); z\in \C\}$, $t_j\in \mathbb{R}_+$, has an analytic continuation to a neighborhood
containing the origin and that the value there
equals \eqref{CHP}. Moreover, it was also known that if $q=2$, then \eqref{analytisk2} 
has an analytic continuation to a neighborhood of the origin in $\C^2$; see, e.g., \cite{BGVY}, \cite{BGY} 
for proofs. Event though not explicitly
stated in \cite{hasamArkiv}, we remark that it follows from the proof that one may replace $X$ in 
\eqref{analytisk2} by a pure dimensional analytic subset $V$ of $X$ and still have analyticity 
in a neighborhood of the origin.

We conclude the introduction with the simple but useful observation that
expressions like $\chi(|f|^2/\epsilon)/f^{\ell}$ essentially are invariant under holomorphic 
differential operators. More precisely, if $\chi\in C^{\infty}([0,\infty])$ and vanish to 
order $\ell$ at $0$, then 
\begin{equation}\label{invarians}
\frac{\partial}{\partial z_j}\frac{\chi(|f|^2/\epsilon)}{f^{\ell}}=
\frac{\partial f}{\partial z_j} \frac{\tilde{\chi}(|f|^2/\epsilon)}{f^{\ell+1}},
\end{equation}
where $\tilde{\chi}(t)=t\chi'(t)-\ell \chi(t)$ is smooth on $[0,\infty]$, vanishes to 
order $\ell +1$ at $0$, and $\chi(\infty)=-\ell \chi(\infty)$.

\section{The case of three functions}
We first note that $I_f^{\varphi}(\epsilon)$ might be discontinuous already when $f=(f_1,f_2)$ consists of two
functions as the Passare-Tsikh example shows. The technical reason is the presence of charts of resonance,
i.e., charts on the resolution manifold where it is not possible to choose coordinates so that 
the pullback of both $f_1$ and $f_2$ are monomials. To deal with the charts of resonance the smoothness
of the $\chi$-functions has to be used; we refer to \cite{hasamJFA} for the details.
In the case of three functions a new difficulty arise; it is no longer
a local problem on the resolution manifold to prove that 
\begin{equation}\label{exekv1}
\int \debar \frac{\bar{f}_1}{|f_1|^2+\epsilon_1}\wedge \debar \frac{\bar{f}_2}{|f_2|^2+\epsilon_2}
\frac{\bar{f}_3}{|f_3|^2+\epsilon_3}\wedge \varphi
\end{equation}
has an unrestricted limit. We illustrate this by considering a simple example; the example of 
Section 3 in \cite{hasamArkiv}. We let $f_1=z_1$, $f_2=z_2$, and $f_3=z_3$ in $\C^3$. Then, obviously,
\eqref{exekv1} has an unrestricted limit for all $(3,1)$-test forms $\varphi$ in $\C^3$. Now, we
let $\varphi=\phi dz\wedge d\bar{z}_3$, where $\phi$ is a test function, we blow up $\C^3$ along the 
$z_3$-axis, and we compute \eqref{exekv1} on the blow-up. The blow-up has two standard charts,
one of which is given by $(w_1,w_2,w_3)\mapsto (w_1,w_1w_2,w_3)=(z_1,z_2,z_3)$. Let us consider the 
contribution, $\mu^{\varphi}(\epsilon)$, to \eqref{exekv1} from this chart. One verifies easily,
using Cauchy's formula, that $\lim_{\epsilon_1 \to 0^+}\mu^{\varphi}(\epsilon)=0$ for fixed
$\epsilon_2,\epsilon_3>0$. On the other hand, one similarly shows that
\begin{equation*}
\lim_{\epsilon_3 \to 0^+}\lim_{\epsilon_1 \to 0^+}\lim_{\epsilon_2 \to 0^+}\mu^{\varphi}(\epsilon)=
-(2\pi i)^2\int_{z_3}\frac{\phi(0,0,z_3)}{z_3}dz_3\wedge d\bar{z}_3,
\end{equation*}
which clearly is non-zero for certain choices of $\phi$. Both the charts on the blow-up therefore have 
to be considered in order to see that \eqref{exekv1} has an unrestricted limit.

In general, however, what can be showed in each chart separately is that
\begin{equation*}
\Big|\int \debar \frac{\bar{f}_1}{|f_1|^2+\epsilon_1}\wedge \debar \frac{\bar{f}_2}{|f_2|^2+\epsilon_2}\wedge
\big( \frac{\bar{f}_3}{|f_3|^2+\epsilon_3}-\frac{1}{f_3}\big) \varphi\Big|
\leq C \epsilon_3^{\omega}
\end{equation*}
for positive constants $C$ and $\omega$ that do not depend on $\epsilon_1,\epsilon_2$, see
Proposition \ref{nyckelprop} below. To see that \eqref{exekv1} has an unrestricted limit it is therefore
enough to show that 
\begin{equation*}
\int \debar \frac{\bar{f}_1}{|f_1|^2+\epsilon_1}\wedge \debar \frac{\bar{f}_2}{|f_2|^2+\epsilon_2}\wedge
\frac{1}{f_3} \varphi =
\int \debar \frac{\bar{f}_1}{|f_1|^2+\epsilon_1} \frac{\bar{f}_2}{|f_2|^2+\epsilon_2}
\frac{1}{f_3} \wedge \debar \varphi
\end{equation*}
\begin{equation*}
\hspace{5.7cm} + \int \debar \frac{\bar{f}_1}{|f_1|^2+\epsilon_1} \frac{\bar{f}_2}{|f_2|^2+\epsilon_2}\wedge
\debar \frac{1}{f_3} \wedge \varphi,
\end{equation*}
has an unrestricted limit. But now we have only two parameters and, moreover, on the right hand side 
there is only $\debar$ in front of one of the parameter depending factors. With an appropriate
induction hypothesis and the result of Proposition \ref{nyckelprop} one can then conclude that 
\eqref{exekv1} has an unrestricted limit; see Section \ref{bevissektion} for details. 

\section{Coleff-Herrera currents}\label{CHsektion}
In this section we review the facts we will need about Coleff-Herrera type currents. The results are
well-known but for the readers convenience we supply detailed proofs.
Let $X$ be a complex $N$-dimensional manifold and let $V\subseteq X$ be a reduced subvariety of pure dimension $n$.
Put $P=N-n$ and let $\mathcal{J}_V$ be the ideal (sheaf) generated by $V$.
A (possibly singular) hypersurface $S\subset X$ is called $V$-polar if $V\setminus S$ is a dense subset of 
$V_{reg}$. Recall from the introduction, cf.\ \eqref{hV}, that if $h\in \mathcal{O}(X)$ and $h^{-1}(0)$ is $V$-polar,
then the principal value current $(1/h)[V]$ exists. It is often convenient to use the technique of analytic continuation
when working with this current; recall from the introduction that
if $\tilde{h}$ is any holomorphic function such that $\tilde{h}^{-1}(0)$ is $V$-polar
and contains $h^{-1}(0)$, then 
\begin{equation*}
\frac{1}{h}[V]. \varphi=
\int_{V} |\tilde{h}|^{2\lambda}\frac{\varphi}{h}\,\Big|_{\lambda=0}, 
\quad \varphi \in \mathscr{D}_{n,n}(X).
\end{equation*}
The next lemma shows, in particular, that this kind of currents have the Standard Extension Property;
a current $\mu$ has the Standard Extension Property with respect to a pure dimensional analytic set $V$
if for any holomorphic function $g$ such that $V\setminus g^{-1}(0)$ is dense in $V$ we have 
$\lim_{\epsilon \to 0^+}\chi(|g|^2/\epsilon)\mu = \mu$, where $\chi$ is a smooth regularization of the 
characteristic function of $[1,\infty)$. Two currents, $\mu$ and $\tilde{\mu}$, which both have the Standard 
Extension Property and are equal outside a hypersurface $H$, i.e., $\mu. \varphi = \tilde{\mu}. \varphi$ if 
$\varphi$ has support outside $H$, are thus equal. 

\begin{lemma}\label{seplemma}
Let $h,f\in \mathcal{O}(X)$ and assume that $h^{-1}(0)$ is $V$-polar and that $V\setminus f^{-1}(0)$ is dense
in $V$. If $\chi$ is a bounded function on $[0,\infty]$ that is identically $0$ close to $0$ and continuous
at $\infty$, then
\begin{equation*}
\lim_{\epsilon\to 0^+} \frac{\chi(|f|^2/\epsilon)}{f^{\ell}}\cdot
\frac{1}{h}[V] = \frac{\chi(\infty)}{f^{\ell}h}[V].
\end{equation*}
\end{lemma}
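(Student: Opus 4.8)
The plan is to reduce the statement to the analytic continuation description of $(1/h)[V]$ and to exploit the fact that both sides of the asserted identity are currents supported on $V$ that, as I will argue, agree away from a thin set and both enjoy the Standard Extension Property. First I would fix a test form $\varphi\in\mathscr{D}_{n,n}(X)$ and write the left-hand side, for $\epsilon>0$, as
\begin{equation*}
\frac{\chi(|f|^2/\epsilon)}{f^{\ell}}\cdot\frac{1}{h}[V].\varphi
=\int_V|\tilde h|^{2\lambda}\,\frac{\chi(|f|^2/\epsilon)}{f^{\ell}h}\,\varphi\,\Big|_{\lambda=0},
\end{equation*}
where $\tilde h$ is chosen holomorphic with $\tilde h^{-1}(0)$ being $V$-polar and containing $h^{-1}(0)\cup f^{-1}(0)$; this is legitimate since $V\setminus\tilde h^{-1}(0)$ is dense in $V$. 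The cleanest route is then to pass to a log-resolution $\pi\colon\widetilde V\to V$ (via Hironaka) in which the pullbacks of $f$, $h$, and $\tilde h$ become monomials with normal crossings. In such a chart everything reduces to a one-variable model computation where the behavior of $\chi(|f|^2/\epsilon)$ as $\epsilon\to0^+$ can be controlled explicitly.

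The key steps, in order, are as follows. First, establish that $(\chi(\infty)/(f^\ell h))[V]$ exists as a current: since $\chi$ is bounded and continuous at $\infty$, the product $f^\ell h$ has zero set $\subseteq\tilde h^{-1}(0)$, which is $V$-polar, so the principal value $(1/(f^\ell h))[V]$ is well defined and I may define the right-hand side through its analytic continuation with the regularizing factor $|\tilde h|^{2\lambda}$. Second, work in a resolution chart where $\pi^*f=u\,s^{a}$, $\pi^*h=v\,s^{b}$, $\pi^*\tilde h=w\,s^{c}$ are monomials times nonvanishing holomorphic factors, with multi-indices $a,b,c$ and $c\geq \max(\ell a, b)$ componentwise in the relevant sense. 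Then $\chi(|f|^2/\epsilon)=\chi(|u|^2|s^a|^2/\epsilon)$, and the integral splits into one-dimensional factors of the form $\int|s_k|^{2(\lambda c_k-\ell a_k-b_k)}\,\chi(|u|^2|s^a|^2/\epsilon)\,\psi\,ds_k\wedge d\bar s_k$ against a smooth compactly supported $\psi$. Third, analyze these factors as $\epsilon\to0^+$ at $\lambda=0$: on the locus where $s^a\neq0$ the cutoff $\chi(|f|^2/\epsilon)\to\chi(\infty)$ pointwise, while near $s^a=0$ the factor $\chi$ vanishes identically (it is $0$ close to $0$), so the singular contribution of $1/f^\ell$ along $\{f=0\}$ is suppressed. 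Dominated convergence in the $s$-variables, uniform in a neighborhood of $\lambda=0$, then yields
\begin{equation*}
\lim_{\epsilon\to0^+}\int_V|\tilde h|^{2\lambda}\frac{\chi(|f|^2/\epsilon)}{f^\ell h}\varphi
=\int_V|\tilde h|^{2\lambda}\frac{\chi(\infty)}{f^\ell h}\varphi,
\end{equation*}
and evaluating the analytic continuation at $\lambda=0$ gives the claim.

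The main obstacle I anticipate is justifying the interchange of the limit $\epsilon\to0^+$ with the analytic continuation to $\lambda=0$, since naive dominated convergence applies only for $\mathfrak{Re}\,\lambda$ large, where the integrals converge absolutely. The way around this is to argue in the resolution chart that each one-variable factor extends meromorphically in $\lambda$ with poles only in an arithmetic progression $\{-s-\mathbb{N}\}$, $s\in\mathbb{Q}_+$, away from the origin, and that the $\epsilon$-dependent integrands converge \emph{in the meromorphic sense}: that is, the Mellin-type factors $\int|s_k|^{2(\lambda c_k-\ell a_k-b_k)}\chi(|s^a|^2/\epsilon)\cdots$ converge as $\epsilon\to0^+$ to their $\chi(\infty)$-counterparts uniformly on compact $\lambda$-subsets avoiding the pole set, because the difference is supported where $\chi(|f|^2/\epsilon)-\chi(\infty)\neq0$, a region shrinking to $\{s^a=0\}$ on which one gains a positive power $\epsilon^\omega$ by the usual monomial estimate. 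Since the value at $\lambda=0$ depends continuously (indeed analytically) on the family near $\lambda=0$, the limits commute and the identity follows. An alternative, cleaner finish avoiding the resolution altogether: observe that both currents in the statement have the Standard Extension Property (the right-hand side because it is of principal-value type, and the limit on the left inherits it), and that they manifestly agree when tested against forms supported off $f^{-1}(0)$, where $\chi(|f|^2/\epsilon)/f^\ell\to\chi(\infty)/f^\ell$ uniformly; the uniqueness remark preceding the lemma then forces equality.
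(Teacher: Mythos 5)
Your overall strategy---analytic continuation in $\lambda$ combined with a Hironaka resolution in which $f$ and $h$ become monomials---is the same as the paper's, and you correctly isolate the crux: interchanging $\epsilon\to0^+$ with evaluation of the analytic continuation at $\lambda=0$. But the mechanism you offer for that interchange does not work. Your estimate of the difference of the Mellin-type integrals (``supported on a region shrinking to $\{s^a=0\}$, where one gains $\epsilon^{\omega}$ by the usual monomial estimate'') requires the integrand $|s|^{2\mathrm{Re}\,\lambda\,c-\ell a-b}|\Phi|$ to be absolutely integrable, which holds only for $\mathrm{Re}\,\lambda$ large; at $\lambda=0$ the integral over $\{|s^a|^2\le T\epsilon\}$ is $+\infty$ as soon as some $\ell a_k+b_k\ge 2$. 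So you get uniform convergence only on a half-plane $\mathrm{Re}\,\lambda>\lambda_0$, and uniform convergence there does not control the value of the analytic continuations at $\lambda=0$: the entire functions $F_n(\lambda)=e^{n(1-\lambda)}/n$ tend to $0$ uniformly on $\mathrm{Re}\,\lambda\ge 1+\delta$ yet $F_n(0)\to\infty$. Likewise, ``dominated convergence uniform in a neighborhood of $\lambda=0$'' fails: already in one variable $\chi(|s|^2/\epsilon)\phi(s)/s^2$ admits no integrable dominating function independent of $\epsilon$, and the existence of the limit rests on cancellation, not domination. Your alternative finish via the Standard Extension Property is circular for the same reason: it presupposes that the limit on the left-hand side exists (and has the SEP), and establishing that existence is exactly the content of the lemma.

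The missing ingredient is the device the paper actually uses. After normalizing coordinates so that $\pi^*f=x^{\alpha}$ carries no unit (so that $\chi(|x^{\alpha}|^2/\epsilon)$ is purely radial and factors out of the angular integrals), one expands the smooth density $|v|^{2\lambda}\phi/v$ by the Taylor-type lemma (Lemma 6 of \cite{hasamJFA}) as
$\sum_{K+L<\ell\alpha+\beta-\mathbf{1}}x^{K}\bar{x}^{L}\Phi_{K,L}+\sum_{K+L=\ell\alpha+\beta-\mathbf{1}}x^{K}\bar{x}^{L}\Phi_{K,L}$,
where each $\Phi_{K,L}$ in the first sum is independent of some coordinate $x_j$. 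In polar coordinates the first sum contributes nothing because the corresponding $\theta_j$-integral of $e^{i\theta_j(K_j-L_j-\ell\alpha_j-\beta_j)}$ vanishes; for the second sum the factor $x^{K}\bar{x}^{L}/x^{\ell\alpha+\beta}$ has modulus $r^{-\mathbf{1}}$, which cancels against the Jacobian, so the integrand becomes bounded, one may set $\lambda=0$, and genuine dominated convergence as $\epsilon\to0^+$ produces the factor $\chi(\infty)$. Without this expansion-plus-angular-cancellation step (or an equivalent integration by parts exhibiting the continuation explicitly), the passage to $\lambda=0$ in your argument is unjustified.
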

\begin{proof}
By Hironaka's theorem one may assume that $V$ is an $n$-dimensional manifold and that $\{h\cdot f=0\}$
has normal crossings. Locally one can then choose coordinates $x$ such that $f=x^{\alpha}$ and 
$h=vx^{\beta}$, where $v$ is an invertible holomorphic function. Letting $\varphi=\phi \,dx\wedge d\bar{x}$
we thus see that $\chi/(f^{\ell}h)[V].\, \varphi$ is a finite sum of terms like
\begin{equation}\label{sep1}
\int_V |x^{\beta}|^{2\lambda} \frac{\chi(|x^{\alpha}|^2/\epsilon)}{x^{\ell\alpha +\beta}}
\frac{|v|^{2\lambda}}{v}\phi \,dx\wedge d\bar{x}\, \Big|_{\lambda=0}.
\end{equation}
By Lemma 6 in \cite{hasamJFA} we can write
\begin{equation*}
\frac{|v|^{2\lambda}}{v}\phi =
\sum_{K+L<\ell \alpha+\beta -{\bf 1}} x^K\bar{x}^L \Phi_{K,L}(\lambda,x)
+\sum_{K+L=\ell \alpha+\beta -{\bf 1}} x^K\bar{x}^L \Phi_{K,L}(\lambda,x),
\end{equation*}
where each $\Phi_{K,L}$, with $K+L < \ell\alpha +\beta- {\bf 1}$, is independent of at least some coordinate $x_j$.
Using this, and changing to polar coordinates, one readily checks that the first sum on the right hand side
does not contribute to the integral \eqref{sep1}. Substituting the second sum into \eqref{sep1} the singularity 
of the integrand vanishes and one may put $\lambda=0$ and let $\epsilon\to 0^+$ to obtain
\begin{equation*}
\lim_{\epsilon\to 0^+}
\int_{r,\theta} \chi(r^{2\alpha}/\epsilon)
\sum_{K+L=\ell \alpha+\beta -{\bf 1}}e^{i\theta\cdot (K-L-\ell\alpha -\beta)} \Phi_{K,L}(0,re^{i\theta})dr d\theta
\end{equation*}
\begin{equation*}
=\chi(\infty)\int_{r,\theta} 
\sum_{K+L=\ell \alpha+\beta -{\bf 1}}e^{i\theta\cdot (K-L-\ell\alpha -\beta)} \Phi_{K,L}(0,re^{i\theta})dr d\theta.
\end{equation*}
Computing $(1/(f^{\ell}h))[V]. \varphi$ in the same way, using the same desingularization and choice of coordinates one 
easily checks that this last integral is what one gets (in the $x$-chart). 
\end{proof}

Let $Q$ be a holomorphic differential operator in $X$ and put $\mu=(1/h)[V]$. It is clear that 
$\bar{\mathcal{J}}_V\cdot Q(\mu) =0$ and that $\textrm{Supp}(\debar Q(\mu)) \subseteq h^{-1}(0)$.
Moreover, from the lemma it follows that $Q(\mu)$ has the Standard Extension Property. In fact,
let $\{f=0\}$ be a hypersurface in $X$ such that $V\setminus \{f=0\}$ is dense in $V$, let $\chi$
be a smooth regularization of the characteristic function of $[1,\infty)$, and put
$\chi^{\epsilon}=\chi(|f|^2/\epsilon)$. Then a simple computation shows that
\begin{equation}\label{Qchi}
\chi^{\epsilon}\cdot Q(\mu)=Q(\chi^{\epsilon}\cdot \mu)+\sum_j Q_j\big(\frac{\chi_j^{\epsilon}}{f^{k_j}}\cdot \mu\big),
\end{equation}
where $Q_j$ are certain differential operators and $\chi_j^{\epsilon}=\chi_j(|f|^2/\epsilon)$
with $\chi_j$ smooth on $[0,\infty]$ and $\chi_j(\infty)=0$; cf.\ \eqref{invarians}. From the lemma it then follows
that $\chi^{\epsilon}\cdot Q(\mu) \to Q(\mu)$.

With these facts in mind we define the Coleff-Herrera currents on $V$, $\CH_V$, 
and the Coleff-Herrera currents on $V$ with pole
along $S$, $\CH_V[*S]$, for hypersurfaces $S\subseteq X$ such that $V\setminus S$ is dense in $V$.

\begin{definition}\label{CHdef}(The sheaves $\CH_V$ and $\CH_V[*S]$.)

A current $\mu$ of bidegree $(0,P)$ on an open set $U\subseteq X$ is a section of 
$\CH_V$ over $U$ if 
\begin{itemize}
\item[1)] $\mu$ has the Standard Extension Property,
\item[2)] $\bar{\mathcal{J}}_V\cdot \mu =0$,
\item[3)] $\debar\mu =0$.
\end{itemize}
If $\mu$ satisfies 1), 2), and $\textrm{Supp}(\debar \mu) \subseteq S$, then
we say that $\mu$ is a section of $\CH_V[*S]$ over $U$.
\end{definition}

We have the following local representation of Dolbeault-Lelong type of currents in $\CH_V$,
and consequently of currents in $\CH_V[*S]$; see below. The slick proof is taken from \cite{matsa2}.

\begin{proposition}\label{DLprop}
Let $X$ be a neighborhood of the closure of the unit ball $\B\subset \C^N$ and let $\mu \in \CH_V$. In $\B$, there is 
a holomorphic differential operator $Q$, a holomorphic $n$-form $\vartheta$, and a holomorphic function
$h$ with $V$-polar zero set, such that
\begin{equation}\label{DLrep}
\mu. (\varphi \wedge dz)=\lim_{\epsilon \to 0^+}
\int_{V\cap \{|h|>\epsilon\}} \frac{Q(\varphi)\wedge \vartheta}{h}, \quad \varphi \in \mathscr{D}_{0,n}(\B).
\end{equation} 
\end{proposition}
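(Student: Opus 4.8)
The plan is to exploit a generic finite projection together with the Standard Extension Property, reducing the statement to a fibrewise residue computation over $V$. First I would choose generic linear coordinates $z=(z',z'')$, with $z'=(z_1,\dots,z_n)$ and $z''=(z_{n+1},\dots,z_N)$, so that the projection $\pi\colon (z',z'')\mapsto z'$ restricts to a finite branched covering $\pi\colon V\to\Omega$ of degree $m$ onto an open set $\Omega\subseteq\C^n$. Let $D\subset\Omega$ be the branch locus and pick a holomorphic $d$ on $\C^n$ with $\{d=0\}\supseteq D$; then $\pi^*d$ vanishes on no component of $V$, so it has $V$-polar zero set. Over $\Omega\setminus D$ the set $V$ splits into $m$ disjoint graphs $z''=\psi^{(i)}(z')$, and there the functions $w=(w_{n+1},\dots,w_N)$, $w_k:=z_k-\psi^{(i)}_k(z')$, give $P=N-n$ coordinates transverse to $V$.

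Next I would pin down the local shape of $\mu$ over $\Omega\setminus D$. Since $\mu$ has bidegree $(0,P)$, is $\debar$-closed, is annihilated by $\bar{\mathcal{J}}_V$, and is supported on $V$, on each sheet it must be a finite sum of transverse derivatives of the basic sheet current: in the $w$-coordinates an $\mathcal{O}$-combination of $\partial_w^\alpha\big(\debar[1/w_{n+1}]\wedge\cdots\wedge\debar[1/w_N]\big)$, with the coefficients holomorphic because $\debar\mu=0$ kills the tangential $\debar$ and $\bar{\mathcal{J}}_V\mu=0$ kills the transverse antiholomorphic dependence; the order $|\alpha|$ is finite because $\mu$ is a current. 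Pairing such a term with $\varphi\wedge dz$ and moving the $\partial_w^\alpha$ onto the test form by integration by parts, each term acts as a fibrewise Grothendieck residue in $z''$ of $Q(\varphi)\wedge\vartheta$ for a suitable holomorphic differential operator $Q$ (this is precisely the mechanism producing $Q$) and holomorphic $n$-form $\vartheta$. Summing over the $m$ sheets and integrating over $z'\in\Omega\setminus D$ then collapses the iterated residue into the single principal-value integral $\int_{V\cap\{|h|>\epsilon\}}Q(\varphi)\wedge\vartheta/h$.

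Finally I would globalize and extend across the exceptional set. By finiteness of $\pi$ and Weierstrass preparation the symmetric functions of the sheets $\psi^{(i)}$ are holomorphic across $D$ and across the singular locus of $V$, so the fibrewise residue data patch to a genuine meromorphic $n$-form on $V$ and $Q$, $\vartheta$ extend holomorphically to all of $\B$; I would absorb the discriminant of the covering into $h$, obtaining a single holomorphic $h$ with $V$-polar zero set so that the right-hand side of \eqref{DLrep} defines a current with the Standard Extension Property (using \eqref{invarians} and Lemma~\ref{seplemma} to control the principal-value limit). Both $\mu$ and this candidate current then have the Standard Extension Property and agree on the dense open set $V\setminus\{h=0\}$, hence coincide by the uniqueness of such currents noted before Lemma~\ref{seplemma}.

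The step I expect to be the main obstacle is the second one: passing from the transverse, codimension-$P$ residue description of $\mu$ to a single scalar principal-value integral over $V$ with one denominator $h$. This demands that the finite transverse order be genuinely convertible into a holomorphic differential operator $Q$ acting on $\varphi$, rather than transverse data with no record on $V$, and that the $P$-fold vertical residue collapse cleanly to the sheet sum. Controlling this uniformly up to the branch locus, and verifying that the resulting $h$ is $V$-polar and that the candidate current inherits the Standard Extension Property, is where the real work lies.
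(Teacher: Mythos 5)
Your proposal is correct and is essentially the paper's own argument: both rest on the local normal form $\mu=\tfrac{1}{(2\pi i)^P}\sum_\alpha a_\alpha(w'')\,\debar(1/w_P^{\alpha_P+1})\wedge\cdots\wedge\debar(1/w_1^{\alpha_1+1})$ with holomorphic coefficients (extracted from $\bar{\mathcal{J}}_V\mu=0$, $\debar\mu=0$ and the finite order of $\mu$ via a transverse Taylor expansion), followed by a generic projection to globalize and the Standard Extension Property to identify the two currents across $\{h=0\}$. The only real difference is in the globalization, where the paper's device is slicker than your sheet-by-sheet bookkeeping: it takes $w=(H,z'')$ with $H_1,\ldots,H_P\in\mathcal{J}_V(\B)$, defines the coefficients globally as the automatically holomorphic push-forwards $a_\alpha=\Pi_*(w^\alpha dw'\wedge\mu/\alpha!)$, and clears denominators with powers of $\tilde h=\det(\partial H/\partial z')$ (so $h=\tilde h^{k+\ell}$), thereby sidestepping the need to control per-sheet coefficients and symmetric functions near the branch locus.
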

\begin{proof}
Let $y \in V$ and assume that we have local coordinates $w=(w';w'')=(w_1,\ldots,w_P;w_{P+1},\ldots,w_N)$
so that $V=\{w'=0\}$ close to $y$.
If $1\leq j \leq P$ we have by 2) in Definition \ref{CHdef} that $\bar{w}_j\mu=0$, and so, by 3),
we get $d\bar{w}_j\wedge \mu = \debar (\bar{w}_j\mu)=0$.
It follows, for any function $\phi$ with support close to $y$, that
$\mu. (\phi \,d\bar{w}_I\wedge dw)=0$ if $d\bar{w}_I\neq \pm d\bar{w}''$.
Let $\Pi\colon \C^N_w \to \C^n_{w''}$ be the standard projection and define
\begin{equation*}
a_{\alpha}(w'')=\Pi_* (w^{\alpha}dw'\wedge \mu /\alpha !)
\end{equation*}
for $\alpha=(\alpha',0)$. Since $\mu$ is $\debar$-closed, the $a_{\alpha}$ must be holomorphic.
We claim that 
\begin{equation}\label{murep}
\mu = \frac{1}{(2\pi i)^P}\sum_{\alpha=(\alpha',0)}
a_{\alpha}(w'')\debar \frac{1}{w_P^{\alpha_P+1}}\wedge \cdots \wedge 
\debar \frac{1}{w_1^{\alpha_1+1}}
\end{equation}
close to $y$, where the sum ranges over $\alpha$ with $|\alpha|$ less than or equal to the order, $M$, of 
$\mu$ on $\bar{\B}$. 
Given the claim, the proposition easily follows for test forms with support close to $y$. In fact,
by the Poincar\`{e}-Lelong formula,
we may take $(-1)^{nP}\sum_{\alpha=(\alpha',0)}a_{\alpha}(w'')\partial^{\alpha}$ as the differential operator, 
let $\vartheta=dw''$, and $h=1$.
To prove the claim, we note that
it suffices to check it for test forms $\phi \,d\bar{w}''\wedge dw$ by the observation in the beginning of the proof.
We write $\phi$ as a Taylor sum
\begin{equation*}
\phi=\sum_{\alpha=(\alpha',0)} \frac{\partial^{|\alpha|}\phi}{\partial w^{\alpha}}(0,w'')
\frac{w^{\alpha}}{\alpha !} + \bar{\mathcal{J}}_V + \mathcal{O}(|w'|^{M+1}).
\end{equation*}  
Noting that $|w'|^{M+1}\mu=0$ and that 
$\bar{\mathcal{J}}_V\cdot \mu=0$,
by 2) in Definition \ref{CHdef}, the claim now follows from the definition of the $a_{\alpha}$ and a 
simple computation.

To obtain global $Q$, $\vartheta$, and $h$ we proceed as follows. We choose $H_1,\ldots,H_P\in \mathcal{J}_V(\B)$
and coordinates $z=(z',z'')$ for $\B$ such that $\tilde{h}:=\det (\partial H/\partial z')$ is generically
non-vanishing on every component of $V$, i.e., $\tilde{h}^{-1}(0)$ is $V$-polar. 
Outside $\{\tilde{h}=0\}$ we can then make the change of variables
$w=(w',w''):=(H,z'')$. Note however that $w_j\in \mathcal{O}(\B)$ for all $j$. Outside $\{\tilde{h}=0\}$
we thus have a realization \eqref{murep} of $\mu$ with $a_{\alpha}\in \mathcal{O}(\B)$.
Moreover, since $\partial /\partial w'=  \ ^t(dH/dz')^{-1} \partial /\partial z'$, we see that 
\begin{equation*}
Q':=(-1)^{nP}\tilde{h}^k \sum_{\alpha=(\alpha',0)}a_{\alpha}(w'')\partial_{w'}^{\alpha}
\end{equation*}
is a holomorphic differential operator in $\B$ if $k$ is large enough; recall that $|\alpha|\leq M$. 
For large enough $\ell$ we then define the holomorphic differential operator 
$Q$ by $Q(\phi)=\tilde{h}^{\ell}Q'(\phi/\tilde{h})$. Letting $h=\tilde{h}^{k+\ell}$ and $\vartheta=dz''$,
the formula \eqref{DLrep} then follows from \eqref{murep} if $\varphi$ has support outside $\{h=0\}$.
But both $\mu$ and the current defined by the right hand side of \eqref{DLrep}
has the Standard Extension Property, by 1) in Definition \ref{CHdef} and the comment after the proof of 
Lemma \ref{seplemma} respectively, and so the proposition follows. 
\end{proof}

This proposition makes it possible to divide Coleff-Herrera currents by holomorphic functions. 
Let $\mu\in \CH_V$ and let $f$ be a holomorphic function such that $V\setminus f^{-1}(0)$ is dense in $V$.
Given a local representation \eqref{DLrep} of $\mu$ we put
\begin{equation}\label{mugenomf}
\frac{1}{f}\mu. (\varphi\wedge dz) = \lim_{\epsilon \to 0^+} 
\int_{V\cap \{|hf|>\epsilon\}} \frac{Q(\varphi/f)\wedge \vartheta}{h}.
\end{equation}
It is clear that $(1/f)\mu \in \CH_V[*f^{-1}(0)]$. On the other hand, if $\gamma\in \CH_V[*f^{-1}(0)]$, then
(at least locally) for some large $k$ we have $\tau=f^k\gamma\in \CH_V$. Thus, $\gamma=(1/f^k)\tau$ for some 
$\tau\in \CH_V$. It follows that we have representations \eqref{DLrep} also for currents $\gamma \in \CH_V[*S]$ if 
$V\setminus S$ is dense in $V$ and that $(1/f)\gamma$ is defined. From Lemma \ref{seplemma} and the 
technique of its proof it follows that 
\begin{equation}\label{divekv1}
\frac{1}{f}\mu = \lim_{\epsilon \to 0^+}\frac{\chi(|f|^2/\epsilon)}{f}\mu =
\frac{|f|^{2\lambda}}{f}\mu \Big|_{\lambda=0}, \quad \mu\in \CH_V[*S],
\end{equation}
cf.\ also \eqref{Qchi}.

\begin{proposition}\label{destreckprop}
Let $S\subseteq X$ be a hypersurface such that $V\setminus S$ is dense in $V$. Then
\begin{equation*}
\debar \colon \CH_V[*S] \to \CH_{V\cap S}.
\end{equation*}
\end{proposition}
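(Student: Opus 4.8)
The plan is to show that if $\mu \in \CH_V[*S]$, then $\debar\mu$ satisfies the three defining conditions of $\CH_{V\cap S}$ from Definition \ref{CHdef}, namely that $\debar\mu$ is $\debar$-closed, is annihilated by $\bar{\mathcal{J}}_{V\cap S}$, has the Standard Extension Property, and finally that it has the correct bidegree and support. The bidegree is immediate: $\mu$ has bidegree $(0,P)$, so $\debar\mu$ has bidegree $(0,P+1)$, which is exactly the codimension of $V\cap S$ in $X$ provided $S$ cuts $V$ properly; since $V\setminus S$ is dense in $V$, the intersection $V\cap S$ has pure dimension $n-1$ and codimension $P+1=N-(n-1)$, as required. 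The condition $\debar(\debar\mu)=0$ is trivial, and the support statement $\textrm{Supp}(\debar\mu)\subseteq S$ is built into the definition of $\CH_V[*S]$.

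The first substantive step is to verify that $\debar\mu$ has support contained in $V\cap S$, not merely in $S$. This follows because $\mu$ is supported on $V$ (it is annihilated by $\bar{\mathcal{J}}_V$ and is of the right bidegree), so $\debar\mu$ is supported on $V$ as well; combined with $\textrm{Supp}(\debar\mu)\subseteq S$ this gives $\textrm{Supp}(\debar\mu)\subseteq V\cap S$. The second step, and the one I expect to be the main obstacle, is establishing that $\bar{\mathcal{J}}_{V\cap S}\cdot\debar\mu=0$. Here I would use the local division representation. Locally we may write $\mu=(1/f^k)\tau$ with $\tau\in\CH_V$ and $S=\{f=0\}$, and then invoke Proposition \ref{DLprop} to represent $\tau$ as a Dolbeault-Lelong current $\tau.(\varphi\wedge dz)=\lim_\epsilon\int_{V\cap\{|h|>\epsilon\}}Q(\varphi)\wedge\vartheta/h$. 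The antiholomorphic functions vanishing on $V\cap S$ are generated by $\bar{\mathcal{J}}_V$ together with $\bar f$. Since $\bar{\mathcal{J}}_V$ already annihilates $\mu$ (hence $\debar\mu$, because $d\bar g\wedge\mu=\debar(\bar g\mu)=0$ for $\bar g\in\bar{\mathcal{J}}_V$ by the argument in the proof of Proposition \ref{DLprop}), it remains only to check $\bar f\cdot\debar\mu=0$. The natural computation is $\bar f\,\debar\mu=\debar(\bar f\mu)-d\bar f\wedge\mu$; since $\mu\in\CH_V[*S]$ is smooth off $S=\{f=0\}$ and $\bar f$ vanishes there, one expects $\bar f\mu$ to vanish as a current and $d\bar f\wedge\mu$ to vanish for the same reason, but this must be justified carefully using the Standard Extension Property rather than pointwise vanishing.

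The third step is the Standard Extension Property for $\debar\mu$ with respect to $V\cap S$. For this I would take a holomorphic $g$ with $(V\cap S)\setminus\{g=0\}$ dense in $V\cap S$ and show $\lim_{\epsilon\to0^+}\chi(|g|^2/\epsilon)\,\debar\mu=\debar\mu$. Writing $\chi^\epsilon=\chi(|g|^2/\epsilon)$, the identity $\chi^\epsilon\,\debar\mu=\debar(\chi^\epsilon\mu)-\debar\chi^\epsilon\wedge\mu$ reduces matters to two limits. For the first, $\chi^\epsilon\mu\to\mu$ by the Standard Extension Property of $\mu$, and then $\debar$ is continuous; for the second, the term $\debar\chi^\epsilon\wedge\mu$ carries a factor $\bar g'(|g|^2/\epsilon)\,\bar\partial\bar g/\epsilon$, and one shows it tends to $0$ against test forms by passing to a resolution where $g$ is a monomial and using the same polar-coordinate and desingularization argument as in the proof of Lemma \ref{seplemma}. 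This last estimate is where the smoothness of $\chi$ and the structure from Lemma \ref{seplemma} do the real work; once the three conditions are in hand, the conclusion $\debar\mu\in\CH_{V\cap S}$ follows directly from Definition \ref{CHdef}.
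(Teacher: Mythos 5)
Your skeleton---verifying the conditions of Definition \ref{CHdef} by means of the identities $\bar f\,\debar\mu=\debar(\bar f\mu)-d\bar f\wedge\mu$ and $\chi^\epsilon\,\debar\mu=\debar(\chi^\epsilon\mu)-\debar\chi^\epsilon\wedge\mu$---is in the right spirit (the paper works instead with the analytic continuation $\debar\mu=(\debar|f|^{2\lambda}/f)\tau|_{\lambda=0}$ and a four-part claim, but that difference is largely cosmetic). However, there are genuine gaps. First, your assertion that $\bar f\mu$ and $d\bar f\wedge\mu$ vanish is false: already for $V=X=\C$ and $\mu=1/f$ one has $\bar f\mu=\bar f/f\neq0$ and $d\bar f\wedge\mu=d\bar f/f\neq0$; only the combination $\debar(\bar f\mu)-d\bar f\wedge\mu$ vanishes, and proving that is exactly the content of the paper's \eqref{dekv4}, which requires a resolution computation rather than pointwise vanishing of $\bar f$ on $S$. (Note also that $\mu$ is not ``smooth off $S$'' unless $P=0$; it is a current supported on $V$.) Second, $\mathcal{J}_{V\cap S}$ is in general strictly larger than the ideal generated by $\mathcal{J}_V$ and $f$: for the cusp $V=\{z_2^2=z_1^3\}\subset\C^2$ and $S=\{z_1=0\}$ one has $z_2\in\mathcal{J}_{V\cap S}$ but $z_2\notin\mathcal{J}_V+(z_1)=(z_2^2,z_1)$. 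So checking annihilation by $\bar{\mathcal{J}}_V$ and $\bar f$ does not establish condition 2); the paper's \eqref{dekv4} is stated for arbitrary $g\in\mathcal{J}_{V\cap f^{-1}(0)}$ for precisely this reason, the point being that every exceptional component $\{x_1=0\}$ carrying the residue maps into $V\cap f^{-1}(0)$, whence $x_1\mid\pi^*g$ and the factor $\bar x_1$ kills the residue.

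Third, and most importantly, the limit $\debar\chi^\epsilon\wedge\mu\to0$ is the hard step, and it is not ``the same polar-coordinate argument as in Lemma \ref{seplemma}''. That lemma is a pure principal-value statement, whereas $\debar\chi^\epsilon$ can concentrate a residue along $g^{-1}(0)$. The only reason the limit vanishes is the dimension hypothesis: $V\cap S\cap g^{-1}(0)$ has codimension $2$ in $V$ (equivalently, $(f,g)$ is a complete intersection on $V$), so any antiholomorphic $(n-1)$-form pulls back to zero on $\pi^{-1}(V\cap S\cap g^{-1}(0))$, and this supplies the extra factor $\bar x_1$ that kills the charts where $x_1$ divides both $\pi^*f$ and $\pi^*g$; this is the argument around \eqref{dekv5} in the paper. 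Your proposal never invokes the complete-intersection condition, so as written this step cannot be completed. The remaining points (bidegree, support in $V\cap S$, $\debar$-closedness, and $\debar(\chi^\epsilon\mu)\to\debar\mu$ from the Standard Extension Property of $\mu$ together with the weak continuity of $\debar$) are fine.
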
 

\begin{remark}
This mapping actually fits into a long exact sequence, see, e.g., \cite{JEBabel}. 
In particular, if $S$ is $V$-polar, then
\begin{equation*}
0\to \CH_V \hookrightarrow \CH_V[*S] \stackrel{\debar}{\longrightarrow} \CH_{V\cap S} \longrightarrow
\mathcal{H}^{P+1}_{[V]}(\mathcal{O}_X) \to 0
\end{equation*}
is exact. Here, $\mathcal{H}^{P+1}_{[V]}(\mathcal{O}_X)$ is (isomorphic to) the cohomology group 
$\textrm{Ker}_{\debar}(\mathcal{C}_V^{0,P+1})/\debar (\mathcal{C}_V^{0,P})$, where
$\mathcal{C}_V^{0,*}$ are the currents on $X$ of bidegree $(0,*)$ with support contained in $V$. If,
in addition, $V$ is Cohen-Macaulay then this group vanishes and the mapping of Proposition \ref{destreckprop}
becomes surjective.
\end{remark}

\begin{proof}
We will start by indicating how to prove the following 

\smallskip

{\em Claim:} Let $f,g,h$ be holomorphic functions such that $h^{-1}(0)$ is $V$-polar and $V\setminus f^{-1}(0)$
is dense in $V$. Then
\begin{equation}\label{dekv1}
\lambda \mapsto \frac{\debar |f|^{2\lambda}}{f^{\ell}}\wedge \frac{1}{h}[V] \quad \textrm{has an analytic continuation
as a current},
\end{equation}
\begin{equation}\label{dekv2}
\frac{\chi(|g|^2/\epsilon)}{g^k}
\frac{\debar |f|^{2\lambda}}{f^{\ell}}\wedge \frac{1}{h}[V]\Big|_{\lambda=0} \to 0, \quad \textrm{as} \quad
\epsilon\to 0^+,
\end{equation}
if $\chi\in C^{\infty}([0,\infty])$ and vanishes both close to $0$ and $\infty$. Moreover,
if $(f,g)$ defines a complete intersection on $V$ and $\chi$ is a smooth regularization of the 
characteristic function of $[1,\infty)$ then
\begin{equation}\label{dekv3}
\chi(|g|^2/\epsilon)\frac{\debar |f|^{2\lambda}}{f^{\ell}}\wedge \frac{1}{h}[V]\Big|_{\lambda=0} \to 
\frac{\debar |f|^{2\lambda}}{f^{\ell}}\wedge \frac{1}{h}[V]\Big|_{\lambda=0}, 
\end{equation}
as $\epsilon \to 0^+$. If instead $g\in \mathcal{J}_{V\cap f^{-1}(0)}$, then
\begin{equation}\label{dekv4}
\bar{g}\frac{\debar |f|^{2\lambda}}{f^{\ell}}\wedge \frac{1}{h}[V]\Big|_{\lambda=0} \to 0.
\end{equation}

To prove \eqref{dekv1}, one computes in a resolution $\pi\colon \mathcal{V}\to V$ such that 
$\{\pi^*f \cdot \pi^*h=0\}$ has normal crossings in the manifold $\mathcal{V}$
and one chooses, preferably, local coordinates, $x$, such that 
$\pi^*h=x^{\beta}$ and $\pi^*f=ux^{\alpha}$, where $u$ is holomorphic and invertible.
To prove \eqref{dekv2} and \eqref{dekv4} one proceeds similarly; one first computes 
$(\debar |f|^{2\lambda}/f^{\ell})\wedge (1/h)[V]|_{\lambda=0}$ in a resolution
$\pi\colon \mathcal{V}\to V$ such that 
$\{\pi^*f \cdot \pi^*h \cdot \pi^*g=0\}$ has normal crossings, preferably using coordinates
such that $\pi^*f=ux^{\alpha}$, $\pi^*h=vx^{\beta}$, and $\pi^*g=x^{\gamma}$.
Then it is not too hard to verify \eqref{dekv2} and \eqref{dekv4}.
It is a bit more delicate to prove \eqref{dekv3} since the assumption about complete intersection 
has to be used properly. Let $\varphi$ be a $(n,n-1)$-test form in the base space $X$. 
On a resolution manifold $\mathcal{V}$ as the one above, one chooses an atlas of 
local coordinates with the properties stated above, and moreover, so that 
$\pi^*\varphi = \phi d\bar{x}'\wedge dx$, where $d\bar{x}'=d\bar{x}_2\wedge \cdots \wedge d\bar{x}_n$.
The trick is now to show that $(\debar |f|^{2\lambda}/f^{\ell})\wedge (1/h)[V]. \varphi|_{\lambda=0}$
equals
\begin{equation}\label{dekv5}
\sum_{\stackrel{x_1 \mid \, x^{\alpha}}{x_1 \nmid \, x^{\gamma}}}
\int \frac{\debar |x^{\alpha}|^{2\lambda} |x^{\beta}|^{2s}}{x^{\ell \alpha +\beta}} \wedge
\frac{|u|^{2\lambda} |v|^{2s}}{uv}\rho_x \, \phi d\bar{x}'\wedge dx\Big|_{s=0}\, \Big|_{\lambda=0},
\end{equation}
where $\{\rho_x\}$ is a partition of unity on $\mathcal{V}$. That is, that only charts on $\mathcal{V}$
such that $x_1 \mid \, \pi^*f$ and $x_1 \nmid \, \pi^*g$ contribute. That $x_1$ has to divide $\pi^*f$
is obvious. If, in addition, $x_1$ divides $\pi^*g$, then $\{x_1=0\}\subseteq \pi^{-1} \{f=g=0\}\cap V$.
Since $\{f=g=0\}\cap V$ has dimension $n-2$ it follows that any anti-holomorphic $n-1$-form in $X$
has a vanishing pullback to $\{f=g=0\}\cap V$. Thus, $\pi^*\varphi$ has a vanishing pullback to $\{x_1=0\}$.
It follows that $\phi=\bar{x}_1\tilde{\phi}$ for some smooth $\tilde{\phi}$. Using this one easily shows that 
charts where $x_1$ divides both $\pi^*f$ and $\pi^*g$ do not contribute. 
With this in mind it is not very difficult to show that 
$\chi(|g|^2/\epsilon)(\debar |f|^{2\lambda}/f^{\ell})\wedge (1/h)[V]. \varphi|_{\lambda=0}$ tends to
\eqref{dekv5} as $\epsilon \to 0^+$, which then proves \eqref{dekv3}.

\smallskip

It is now easy to prove the proposition. Let $\mu\in \CH_V[*S]$; it is a local problem to show that 
$\debar \mu \in \CH_{V\cap S}$. Choose a holomorphic function $f$ and a $\tau \in \CH_V$ such that
$S=f^{-1}(0)$ and $\mu = (1/f)\tau$. From \eqref{divekv1} we have $\mu=(|f|^{2\lambda}/f)\tau |_{\lambda=0}$,
and for $\mathfrak{Re} \, \lambda >> 1$ we have $\debar ((|f|^{2\lambda}/f)\tau)=(\debar |f|^{2\lambda}/f)\tau$.
From \eqref{dekv1}, the last expression also has an analytic continuation, and so 
$\debar \mu=(\debar |f|^{2\lambda}/f)\tau |_{\lambda=0}$. Using a representation \eqref{DLrep} of $\tau$ and 
\eqref{dekv1} and \eqref{dekv4} one easily sees that $\bar{g}\debar \mu=0$ if $g\in \mathcal{J}_{V\cap S}$ 
(recall that $S=f^{-1}(0)$). Similarly, if $\tau$ is represented by \eqref{DLrep} it follows from 
\eqref{dekv1}, \eqref{dekv2}, and \eqref{dekv3} that $\chi(|g|^2/\epsilon)\debar \mu \to \debar \mu$, 
as $\epsilon \to 0^+$; i.e., that $\debar\mu$ has the Standard Extension Property.
\end{proof}

\begin{definition}\label{CHprod}
Let $\mu \in \CH_V[*S]$ and let $f$ be a holomorphic function such that $V\setminus f^{-1}(0)$ is dense in $V$
and $V\cap S \setminus f^{-1}(0)$ is dense in $V\cap S$.
We define $\debar (1/f)\wedge \mu$ by 
\begin{equation*}
\debar\frac{1}{f}\wedge \mu = \debar (\frac{1}{f}\mu) - \frac{1}{f}\debar \mu.
\end{equation*}
\end{definition}

That this definition makes sense follows from Proposition \ref{destreckprop}.
It is intuitively clear that $\debar (1/f)\wedge \mu \in \CH_{V\cap f^{-1}(0)}[*S]$ but 
it is not immediate from the definition. However, letting $\mu =(1/g)\tau$, where $\tau\in \CH_V$
and $g^{-1}(0)=S$,
we get from Proposition \ref{nyckelprop} that
\begin{equation*}
\big| \debar\frac{\chi(|f|^2/\epsilon_1)}{f} \wedge \frac{\chi(|g|^2/\epsilon_2)}{g}\tau -
\debar \frac{1}{f}\wedge \frac{1}{g}\tau \big| \hspace{4cm}
\end{equation*}
\begin{equation*}
\hspace{2.5cm}\leq C \epsilon_2^{\omega}+ \big| \debar\frac{\chi(|f|^2/\epsilon_1)}{f} \wedge \mu -
\debar \frac{1}{f}\wedge \mu \big| 
\end{equation*}
\begin{equation*}
\hspace{3cm} \leq C \epsilon_2^{\omega} +\big| \debar(\frac{\chi(|f|^2/\epsilon_1)}{f} \wedge \mu) -
\debar (\frac{1}{f}\wedge \mu) \big|
\end{equation*}
\begin{equation*}
\hspace{2cm}+
\big|\frac{\chi(|f|^2/\epsilon_1)}{f} \debar\mu -
\frac{1}{f}\debar \mu \big| \lesssim \epsilon_1^{\omega} + \epsilon_2^{\omega},
\end{equation*}
where $\chi$ is a smooth regularization of the characteristic function of $[1,\infty)$. 
Thus, $(\debar\chi(|f|^2/\epsilon_1)/f)\wedge (\chi(|g|^2/\epsilon_2))\tau$ converges 
unrestrictedly to $\debar (1/f)\wedge \mu$. First letting $\epsilon_2\to 0$ and then letting 
$\epsilon_1\to 0$ we then see that 
$\debar (1/f)\wedge \mu =(1/g)\debar ((1/f)\tau)\in \CH_{V\cap f^{-1}(0)}[*S]$ by Proposition \ref{destreckprop}.

\medskip

If $\mu\in \CH_V$ and
$f=(f_1,\ldots,f_q)\colon X \to \C^q$ is holomorphic such that $(f_1,\ldots,f_p,f_j)$ defines 
a complete intersection on $V$ for $p+1\leq j \leq q$ we have thus given a meaning to 
\begin{equation*}
\debar\frac{1}{f_1}\wedge \cdots \wedge \debar \frac{1}{f_p}\cdot \frac{1}{f_{p+1}}\cdots \frac{1}{f_q}\wedge \mu.
\end{equation*}
It follows from Theorem \ref{huvudsats} that this product, apart from being alternating in $f_1,\ldots,f_p$
as it should, is independent of the ordering of the tuple $f$ and, moreover, that it coincides with the definition
of Coleff-Herrera and Passare.

\section{The key proposition}
In this section we prove the key proposition needed to prove our main theorem.
The proof of the proposition relies on a Whitney type division lemma for the pullback of anti-holomorphic forms
through modifications. This lemma appear also in \cite{hasamArkiv}.

Throughout this section our considerations are local;
$X=\B$ is the unit ball in $\C^N$ and $V$ is an analytic set of pure dimension $n$
(and codimension $P=N-n$)
defined in a neighborhood of $\B$. 

\begin{proposition}\label{nyckelprop}
Let $V\subseteq \B$ be an analytic set of pure dimension $n$, $S\subset \B$ a $V$-polar set, and
$f=(f_1,\ldots,f_q)\colon \B \rightarrow \C^q$ a holomorphic mapping such that 
$(f_1,\ldots,f_p,f_j)$
defines a complete intersection on $V$ for all $j=p+1,\ldots,q$. 
Let also $\chi_j$, $1\leq j \leq q$, be smooth on $[0,\infty]$ and vanish to order $\ell_j$ at $0$. 
Then for any $\mu \in \CH_V[*S]$ and $\varphi\in \mathscr{D}_{0,n-p}(\B)$ we have 
\begin{equation*}
\left|\frac{\debar \chi_1^{\epsilon}\wedge \cdots \wedge \debar 
\chi_p^{\epsilon} \chi_{p+1}^{\epsilon} \cdots \chi_{q-1}^{\epsilon}}{f_1^{\ell_1}\cdots f_{q-1}^{\ell_{q-1}}}
\big(\frac{\chi_q^{\epsilon}}{f_q^{\ell_q}} - \frac{1}{f_q^{\ell_q}}\big)\wedge \mu . (\varphi\wedge dz) \right| 
\leq \hspace{1cm}
\end{equation*}
\begin{equation*}
\hspace{7cm} \leq C \|\varphi\|_{M} \epsilon_{q}^{\omega}, 
\end{equation*}
where $\chi_j^{\epsilon}=\chi_j(|f_j|^2/\epsilon_j)$, $M$ and $\omega$ are positive constants that 
only depend on $f$ and $\textrm{Supp}(\varphi)$, while the positive constant $C$ also might depend
on the $C^M$-norm of the $\chi_j$-functions.
\end{proposition}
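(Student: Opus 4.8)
The plan is to reduce, by the structural results of Section \ref{CHsektion}, to an explicit monomial estimate on a single resolution of $V$, and then to extract the decay in $\epsilon_q$ from the fact that the last factor has a numerator vanishing at infinity. First I would represent $\mu$: writing $\mu=(1/g)\tau$ with $\tau\in\CH_V$ and $g^{-1}(0)\supseteq S$, and invoking the Dolbeault--Lelong representation of $\tau$ from Proposition \ref{DLprop} (operator $Q$, holomorphic $n$-form $\vartheta$, and $V$-polar $h$), the left-hand side becomes an integral over $V$ of the product of the factors against $Q(\varphi)\wedge\vartheta/(gh)$, the $g,h$ being understood as principal values. The observation \eqref{invarians} is then the workhorse: commuting the holomorphic differential operator $Q$ through each factor $\debar\chi_j^\epsilon/f_j^{\ell_j}$ and $\chi_j^\epsilon/f_j^{\ell_j}$ produces a finite sum of terms of exactly the same shape, with the $\chi_j$ replaced by new $\tilde\chi_j$ that are still smooth on $[0,\infty]$ and vanish to the appropriate order at $0$, the exponents $\ell_j$ raised to some $m_j$, and bounded holomorphic factors $\partial^K f_j$ pulled out. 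A direct check with \eqref{invarians} shows that, provided $\chi_q(\infty)=1$ (the relevant normalization, as in Theorem \ref{huvudsats}), the numerator of the last factor remains of the form $\tilde\chi_q(|f_q|^2/\epsilon_q)-\tilde\chi_q(\infty)$, which vanishes at infinity. This reduces everything to bounding a model expression
\[
\int_V \frac{\debar\tilde\chi_1^\epsilon\wedge\cdots\wedge\debar\tilde\chi_p^\epsilon\,\tilde\chi_{p+1}^\epsilon\cdots\tilde\chi_{q-1}^\epsilon}{f_1^{m_1}\cdots f_{q-1}^{m_{q-1}}}\Bigl(\frac{\tilde\chi_q^\epsilon}{f_q^{m_q}}-\frac{\tilde\chi_q(\infty)}{f_q^{m_q}}\Bigr)\wedge\frac{\Psi\wedge\vartheta}{gh},
\]
where $\Psi=Q(\varphi)$ and $\|\Psi\|_{C^{M'}}\lesssim\|\varphi\|_M$ for suitable $M,M'$ depending only on $Q$.

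Next I would resolve. Choose one Hironaka modification $\pi\colon\V\to V$ so that $\pi^*(f_1\cdots f_q\cdot g\cdot h)$ has normal crossings, and in each chart pick coordinates $x$ with $\pi^*f_j=u_jx^{\alpha_j}$, $\pi^*g=u_gx^{\gamma}$, $\pi^*h=u_hx^{\beta}$, the $u$'s invertible. Pulling back and inserting a partition of unity, the model expression becomes a sum of monomial integrals, and two points determine which charts survive. First, for the $(0,p)$-form $\debar\tilde\chi_1^\epsilon\wedge\cdots\wedge\debar\tilde\chi_p^\epsilon$ to be nonzero one needs $p$ independent $d\bar x$-directions, which forces $p$ distinct coordinates $x_{i_1},\dots,x_{i_p}$ with $x_{i_k}\mid\pi^*f_k$. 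Second --- and this is where the complete intersection hypothesis enters exactly as in the proof of \eqref{dekv3} --- if the coordinate $x_{i_0}$ carrying the pole of $f_q$ also divided some $\pi^*f_k$ with $k\le p$, then $\{x_{i_0}=0\}\subseteq\pi^{-1}(\{f_1=\cdots=f_p=f_q=0\}\cap V)$, a set of dimension $n-p-1$ by the assumption that $(f_1,\dots,f_p,f_q)$ defines a complete intersection on $V$; hence the anti-holomorphic $(0,n-p)$-form $\Psi$ pulls back with a vanishing factor $\bar x_{i_0}$ on $\{x_{i_0}=0\}$ and that chart does not contribute. Thus only charts in which $f_q$ carries a genuinely independent pole remain.

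The decay is then extracted on each surviving chart. I would use the Whitney-type division lemma for pullbacks of anti-holomorphic forms (Lemma 6 of \cite{hasamJFA}) to absorb the units $\bar u_j$ and the conjugate monomials into a Taylor decomposition of $\Psi$, separating the integral into one-variable model integrals exactly as in the proof of Lemma \ref{seplemma}. The parameters $\epsilon_1,\dots,\epsilon_{q-1}$ now enter only through the factors $\debar\tilde\chi_k^\epsilon$ and $\tilde\chi_j^\epsilon$, whose contributions are bounded uniformly in those parameters by the estimates underlying the existence of the Coleff--Herrera product. The last factor is $\psi_q(|u_qx^{\alpha_q}|^2/\epsilon_q)/x^{m_q\alpha_q}$ with $\psi_q(t)=\tilde\chi_q(t)-\tilde\chi_q(\infty)$; since $\tilde\chi_q$ is smooth on $[0,\infty]$ one has $|\psi_q(t)|\lesssim(1+t)^{-1}$, and a polar-coordinate computation in $x_{i_0}$ yields, for the relevant exponents,
\[
\Bigl|\int\frac{\psi_q(|u_qx^{\alpha_q}|^2/\epsilon_q)}{x^{m_q\alpha_q}}\,(\cdots)\Bigr|\;\lesssim\;\epsilon_q^{\omega}
\]
for some $\omega>0$ depending only on $\alpha_q,m_q$, i.e.\ only on $f$ and $\textrm{Supp}(\varphi)$.

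The main obstacle is the \emph{uniformity} in the preceding step: the decay $\epsilon_q^\omega$ must survive when $\epsilon_1,\dots,\epsilon_{q-1}\to 0$, and on charts of resonance --- where several of the $f_j$ cannot be simultaneously monomialized and the factors $\debar\tilde\chi_k^\epsilon$ are not pointwise bounded --- one cannot estimate naively. There, as in \cite{hasamJFA}, I would integrate by parts and exploit the smoothness (the $C^M$-norm) of the $\chi_k$ to recover a bound uniform in $\epsilon_1,\dots,\epsilon_{q-1}$ while leaving the $\psi_q$-decay in $\epsilon_q$ untouched. Keeping the resolution, the local coordinates, and the exponents fixed throughout is precisely what guarantees that $M$ and $\omega$ depend only on $f$ and $\textrm{Supp}(\varphi)$, and not on the cutoff functions or on the $\epsilon_j$.
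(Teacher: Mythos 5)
Your overall skeleton does match the paper's: the Dolbeault--Lelong representation of $\mu$, commuting $Q$ through via \eqref{invarians}, a single Hironaka resolution with monomialized $\hat f_j=u_jx^{\alpha(j)}$, and extracting the $\epsilon_q^{\omega}$-decay from $\tilde\chi_q(t)-\tilde\chi_q(\infty)=O(1/t)$ by splitting $\{r^{2\alpha(q)}\le\epsilon_q\}$ from its complement. But the middle of your argument has two genuine gaps. First, your chart analysis fails for $p\ge 2$. Since $\debar\chi_j^{\epsilon}$ is proportional to $d\bar x^{\alpha(j)}/\bar x^{\alpha(j)}+d\bar u_j/\bar u_j$, the product $\debar\chi_1^{\epsilon}\wedge\cdots\wedge\debar\chi_p^{\epsilon}$ does \emph{not} vanish when the $\alpha(j)$ span fewer than $p$ directions: the $d\bar u_j$-terms survive, and this is precisely the resonance phenomenon you cannot discard. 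Likewise, from $x_{i_0}\mid\pi^*f_k$ and $x_{i_0}\mid\pi^*f_q$ you may only conclude $\{x_{i_0}=0\}\subseteq\pi^{-1}(\{f_k=f_q=0\}\cap V)$, a set of dimension $n-2$, not $\pi^{-1}(\{f_1=\cdots=f_p=f_q=0\}\cap V)$ of dimension $n-p-1$; a $(0,n-p)$-form need not pull back to zero on an $(n-2)$-dimensional set when $p\ge2$. What repairs both defects is Lemma \ref{divlemma}: on each stratum $\{x_{i_1}=\cdots=x_{i_\ell}=0\}$ with $i_j\in\mathcal K$ one wedges $\sigma$ with $d\bar f_j$ for every $j\in L$ (the indices whose pullbacks do not already vanish there) so as to reach degree $n-p+|L|$ against a variety of dimension at most $n-p+|L|-1$, and only then does the complete-intersection hypothesis force vanishing. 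Its conclusion is not that charts die, but that $d\bar u_{m+1}\wedge\cdots\wedge d\bar u_p\wedge\hat\sigma$ may be replaced by an anti-holomorphic form divisible by $\bar x_i$ for all $i\in\mathcal K$.

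Second, the uniformity in $\epsilon_1,\ldots,\epsilon_{q-1}$ --- which you correctly identify as the main obstacle --- is not resolved by an appeal to integration by parts as in \cite{hasamJFA}; that is a gesture, not an argument, and it is exactly the content at stake. The paper's mechanism is structural: once Lemma \ref{divlemma} has cancelled every singular anti-holomorphic direction $d\bar x_i/\bar x_i$ with $i\in\mathcal K$, the surviving residue measures $d\chi_1^{\epsilon}\wedge\cdots\wedge d\chi_k^{\epsilon}$ live on coordinates $r'=(r_1,\ldots,r_k)$ with $\alpha(q)_1=\cdots=\alpha(q)_k=0$, so $t_q=r^{2\alpha(q)}/\epsilon_q$ is a function of the remaining variables $r''$ only. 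Fubini then separates the inner $r'$-integral, which is bounded by $\int|d\chi_1\wedge\cdots\wedge d\chi_k|$ uniformly in all parameters, from the outer $r''$-integral in which the decay estimate (property d) of $\mathscr J$, obtained from the Taylor-type expansion \eqref{super6} with the smoothing parameters treated as independent variables) is applied. Without this separation of variables the region $\{t_q\le 1\}$, where there is no decay, could receive mass from the concentrating measures as $\epsilon_1,\ldots,\epsilon_{q-1}\to 0$, and the bound $C\epsilon_q^{\omega}$ would not be uniform. You need to supply this step explicitly; it is the heart of the proof.
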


\begin{proof}
We fix a representation \eqref{DLrep} of $\mu$ (or rather its ``analytic continuation'' counterpart) and write
\begin{equation}\label{steg1}
\mathcal{I}_{f,\mu}^{\varphi}(\epsilon):= \frac{\debar \chi_1^{\epsilon}\wedge \cdots \wedge \debar 
\chi_p^{\epsilon} \chi_{p+1}^{\epsilon} \cdots \chi_{q}^{\epsilon}}{f_1^{\ell_1}\cdots f_{q}^{\ell_q}}
\wedge \mu. (\varphi \wedge dz) \hspace{2cm}
\end{equation}
\begin{equation*}
\hspace{1cm} =\int_{V} \frac{|h|^{2\lambda}}{h}
Q\big( \frac{\debar \chi_1^{\epsilon}\wedge \cdots \wedge \debar 
\chi_p^{\epsilon} \chi_{p+1}^{\epsilon} \cdots \chi_{q}^{\epsilon}}{f_1^{\ell_1}\cdots f_q^{\ell_q}}\wedge
\varphi \big)\wedge \vartheta \,\Big|_{\lambda=0}.
\end{equation*}
Since expressions like $\chi^{\epsilon}/f^{\ell}$ essentially are invariant under holomorphic differential
operators, cf.\ \eqref{invarians}, and since $\debar$ commutes with such operators, 
the right hand side of \eqref{steg1} is, by Leibniz' rule, a finite sum of integrals
of the same kind but with the holomorphic differential operator $Q$ omitted. We can therefore ignore $Q$
in the computations below.
% \begin{equation}\label{steg2}
% \int_{V} \frac{|h|^{2\lambda}}{h}
% \frac{\debar \tilde{\chi}_1^{\epsilon_1}\wedge \cdots \wedge \debar 
% \tilde{\chi}_p^{\epsilon_p} \tilde{\chi}_{r+1}^{\epsilon_{r+1}} \cdots \tilde{\chi}_{q}^{\epsilon_{q}}}{
% f_1^{\ell_1}\cdots f_q^{\ell_q}}\wedge
% \tilde{\varphi} \wedge \Psi \,\Big|_{\lambda=0},
% \end{equation}
% where $\tilde{\varphi}\in \mathscr{D}_{0,n-p}(\B)$ and 
% $\tilde{\chi}_j$ are smooth on $[0,\infty]$ and vanish to order $\ell_j$ at $0$. For convenience,
% we will omit the ``tildes'' in the computations below. 

\smallskip

By Hironaka's theorem, e.g., formulated as in \cite{Hiro} and \cite{Atiyah}, one can find, first an
$n$-dimensional complex manifold $\tilde{V}$ and a proper holomorphic map $\pi_1\colon \tilde{V}\to V$
that defines a biholomorphism outside $V_{sing}$, and then (at least locally on $\tilde{V}$) a further
$n$-dimensional complex manifold $\mathcal{V}$ and a proper holomorphic map 
$\pi_2\colon \mathcal{V}\to \tilde{V}$ such that 
$\mathcal{Z}:=\pi_2^{-1}(\pi_1^*h\cdot \pi_1^*f_1\cdots \pi_1^*f_q=0)$ has normal crossings and 
$\pi_2$ is a biholomorphism outside $\mathcal{Z}$.
Put $\pi=\pi_1\circ \pi_2$ and denote the pull-back under $\pi$ by $\hat{\cdot}$, e.g., $\hat{h}=\pi^*h$.
We choose a (sufficiently fine) finite partition of unity $\{\rho_j\}$ on $\textrm{Supp}(\hat{\varphi})$
and local charts on the $\textrm{Supp}(\rho_j)$ such that $\hat{h},\hat{f}_1,\ldots,\hat{f}_q$ are 
monomials times invertible holomorphic functions. The right hand side of \eqref{steg1}, (recall that we 
may ignore $Q$), is therefore equal to 
\begin{equation}\label{steg3}
\sum_i \int_{\mathcal{V}} \frac{|\hat{h}|^{2\lambda}}{\hat{h}}
\frac{\debar \chi_1^{\epsilon}\wedge \cdots \wedge \debar 
\chi_p^{\epsilon} \chi_{p+1}^{\epsilon} \cdots \chi_{q}^{\epsilon}}{
\hat{f}_1^{\ell_1}\cdots \hat{f}_q^{\ell_q}}\wedge
\hat{\varphi} \wedge \hat{\vartheta}\rho_i \,\Big|_{\lambda=0}.
\end{equation}
Moreover, we may assume that $\varphi$ is of the form $\varphi_Id\bar{z}_I$, $|I|=n-p$, and so we can write 
$\hat{\varphi}=\eta \cdot \phi_1$, where $\eta=\hat{\varphi}_I\in \mathscr{D}_{0,0}(\mathcal{V})$
and $\phi_1=\widehat{d\bar{z}}_I$ is an anti-holomorphic $n-p$-form on $\mathcal{V}$.
We now consider one term of \eqref{steg3}, we drop 
the subscript $i$ from $\rho_i$, and we put $\phi_2:=\eta \hat{\vartheta} \rho$.
In a neighborhood of $\textrm{Supp}(\rho)$ we have local coordinates $x$ such that
$\hat{f}_j=u_jx^{\alpha(j)}$, where $u_j$ are invertible and holomorphic. 
We let $m$ be the number of vectors in a maximal linearly independent
subset of $\{\alpha(1),\ldots,\alpha(p)\}$, and we assume for notational convenience that 
$\alpha(1),\ldots,\alpha(m)$ are linearly independent. As in \cite{PCrelle}, p.\ 46, we can define new coordinates,
still denoted $x$, so that $u_1=\cdots =u_m=1$. For $m+1\leq j \leq p$ we write
$\debar \chi_j^{\epsilon}=\tilde{\chi}_j^{\epsilon}\cdot (d\bar{x}^{\alpha(j)}/\bar{x}^{\alpha(j)}+d\bar{u}_j/\bar{u}_j)$,
where $\tilde{\chi}_j(t)=t\chi'_j(t)$ are smooth on $[0,\infty]$, vanish to order $\ell_j$ at $0$, and map $\infty$ to $0$. 
We will omit the tildes in the computations below, and hence,
with abuse of notation, the term of \eqref{steg3} under consideration can be written
\begin{equation}\label{steg4}
\int_{\mathcal{V}} \frac{|\hat{h}|^{2\lambda}}{\hat{h}}
\frac{\debar \chi_1^{\epsilon}\wedge \cdots \wedge \debar 
\chi_m^{\epsilon} \chi_{m+1}^{\epsilon} \cdots \chi_{q}^{\epsilon}}{
\hat{f}_1^{\ell_1}\cdots \hat{f}_q^{\ell_q}}\bigwedge_{m+1}^p
\big(\frac{d\bar{x}^{\alpha(j)}}{\bar{x}^{\alpha(j)}} + 
\frac{d\bar{u}_j}{\bar{u}_j}\big)
\phi_1\wedge \phi_2\,\Big|_{\lambda=0} 
\end{equation}
From exterior algebra it 
follows that
$d\bar{x}^{\alpha(1)}\wedge \cdots \wedge d\bar{x}^{\alpha(m)}\wedge d\bar{x}^{\alpha(j)}=0$ if
$m+1\leq j \leq p$ since $\alpha(1),\ldots,\alpha(m),\alpha(j)$ are linearly dependent.
Thus, since $\debar \chi_1^{\epsilon}\wedge \cdots \wedge \debar \chi_m^{\epsilon}$ is proportional
to $d\bar{x}^{\alpha(1)}\wedge \cdots \wedge d\bar{x}^{\alpha(m)}$, we may erase the factors
$d\bar{x}^{\alpha(j)}/\bar{x}^{\alpha(j)}$, $m+1\leq j \leq p$ from \eqref{steg4}. 
We now let $\mathcal{K}$ be the set of indices $i$ such that $x_i$ divides 
some $x^{\alpha(j)}$ with $p+1 \leq j \leq q$ and we apply Lemma \ref{divlemma} with in data $\mathcal{K}$ and 
$d\bar{z}_I$. We find that we may replace $d\bar{u}_{m+1}\wedge \cdots \wedge d\bar{u}_p\wedge \phi_1$
by an anti-holomorphic form $\xi=\sum_{|J|=n-m}\xi_Jd\bar{x}_J$, which has the property
that each $\xi_J$ is divisible by all $\bar{x}_i$, $i\in \mathcal{K}$, without affecting the 
integral \eqref{steg4}. We may 
of course assume that $\xi$ consists of one term only, and for notational convenience we assume that
$\xi=\xi'd\bar{x}_{m+1}\wedge\cdots \wedge d\bar{x}_n$. We assume, also for simplicity, that 
$\mathcal{K}\setminus \{m+1,\ldots,n\}=\{k+1,\ldots,m\}$ so that $\xi'$ may be written
$\bar{x}_{k+1}\cdots \bar{x}_m\xi''=:\bar{x}^{{\bf 1}_{k}^m}\xi''$ for some anti-holomorphic function $\xi''$.
We can now re-write the integral \eqref{steg4} as 
\begin{equation}\label{steg5}
\int_{\mathcal{V}} \frac{|\hat{h}|^{2\lambda}}{\hat{h}}
\frac{\debar \chi_1^{\epsilon}\wedge \cdots \wedge \debar 
\chi_m^{\epsilon} \chi_{m+1}^{\epsilon} \cdots \chi_{q}^{\epsilon}}{
\hat{f}_1^{\ell_1}\cdots \hat{f}_q^{\ell_q}}
\frac{\bar{x}^{{\bf 1}_{k}^m}\xi''\bigwedge_{m+1}^nd\bar{x}_j\wedge \phi_2}{\bar{u}_{m+1}\cdots \bar{u}_p} 
\,\Big|_{\lambda=0}. 
\end{equation}
% To see what the value at $\lambda=0$ is we recall that $\hat{h}=vx^{\beta}$, 
% where $v$ is invertible and holomorphic, and then a standard 
% computation (see, e.g., the proof of Lemma 2.1 in \cite{matsa}) shows that \eqref{steg5} is equal to
% \begin{equation}\label{steg6}
% \int_{\mathcal{V}} \frac{\log |x|_{{\bf 1}_{\beta}}}{\beta !} \frac{\partial^{\beta}}{\partial x^{\beta}} \big(
% \frac{\debar \chi_1^{\epsilon_1}\wedge \cdots \wedge \debar 
% \chi_m^{\epsilon_m} \chi_{m+1}^{\epsilon_{m+1}} \cdots \chi_{q}^{\epsilon_{q}}}{
% \hat{f}_1^{\ell_1}\cdots \hat{f}_q^{\ell_q}}
% \frac{\bar{x}^{{\bf 1}_{k}^m}\xi''\bigwedge_{m+1}^nd\bar{x}_j\wedge \phi_2}{v\bar{u}_{m+1}\cdots \bar{u}_p}\big), 
% \end{equation}
% where $\log |x|_{{\bf 1}_{\beta}}=\Pi_{j\in \textrm{Supp}(\beta)}\log |x_j|$. When the holomorphic differential
% operator $\partial^{\beta}/\partial x^{\beta}$ acts we get, by Lemma ?? and abuse of notation, a finite sum of integrals
% looking like
% \begin{equation}\label{steg7}
% \int_{\mathcal{V}} \log |x|_{{\bf 1}_{\beta}} 
% \frac{\debar \chi_1^{\epsilon_1}\wedge \cdots \wedge \debar 
% \chi_m^{\epsilon_m} \chi_{m+1}^{\epsilon_{m+1}} \cdots \chi_{q}^{\epsilon_{q}}}{
% \hat{f}_1^{\ell_1}\cdots \hat{f}_q^{\ell_q}}\bar{x}^{{\bf 1}_{k}^m}\bigwedge_{m+1}^nd\bar{x}_j \wedge \psi,
% \end{equation} 
% where $\psi$ is a smooth and compactly supported $(n,0)$-form.

Now consider the form
$\debar \chi_1^{\epsilon}\wedge \cdots \wedge \debar \chi_m^{\epsilon}\wedge_{m+1}^n d\bar{x}_{j}$.
We write $\debar=\debar'+\debar''$, where $\debar'$ differentiates with respect to the variables
$x'=(x_1,\ldots,x_k)$ and 
$\debar''$ differentiates with respect to the variables $x''=(x_{k+1},\ldots,x_n)$, and we compute:
\begin{equation}\label{super4}
\debar \chi_1^{\epsilon}\wedge \cdots \wedge \debar \chi_m^{\epsilon}\bigwedge_{m+1}^n d\bar{x}_{j}=
(\debar' \chi_1^{\epsilon}+\debar'' \chi_1^{\epsilon})\wedge \cdots \wedge 
(\debar' \chi_m^{\epsilon}+\debar'' \chi_m^{\epsilon})\bigwedge_{m+1}^n d\bar{x}_{j}
\end{equation}
\begin{equation*}
=\sum_{\stackrel{i_1<\cdots <i_k}{i_{k+1}<\cdots < i_m}}
\mbox{sign}(j\mapsto i_j)
\debar' \chi_{i_1}^{\epsilon}\wedge \cdots \wedge \debar' \chi_{i_k}^{\epsilon}\wedge
\debar'' \chi_{i_{k+1}}^{\epsilon}\wedge \cdots \wedge \debar'' \chi_{i_m}^{\epsilon}
\bigwedge_{m+1}^n d\bar{x}_{j}.
\end{equation*}
Let us consider the first term in this sum. It equals
\begin{equation*}
\det \tilde{A} \,\debar \chi_1^{\epsilon}\wedge \cdots \wedge \debar \chi_k^{\epsilon} \wedge
\frac{d\bar{x}_{k+1}\wedge \cdots \wedge d\bar{x}_m}{\bar{x}_{k+1}\cdots \bar{x}_m}
\tilde{\chi}_{k+1}^{\epsilon}\cdots \tilde{\chi}_m^{\epsilon}
\bigwedge_{m+1}^n d\bar{x}_{j},
\end{equation*}
where $\tilde{A}$ is the $(m-k)\times (m-k)$-matrix $(\alpha(i)_{j})_{i,j=k+1}^m$ and
$\tilde{\chi}_j^{\epsilon}=(|\hat{f}_j|^2/\epsilon_j)\cdot \chi_j'(|\hat{f}_j|^2/\epsilon_j)$. (As usual, we omit the 
tildes below.)
The other terms in the sum on the right hand side of \eqref{super4} are of the same type. In particular,
each such term has $\bar{x}_{k+1}\cdots \bar{x}_m=\bar{x}^{{\bf 1}_{k}^m}$ as denominator. 
Recall also that $\hat{h}=vx^{\beta}$ with $v$ invertible and holomorphic. Substituting 
\eqref{super4} into \eqref{steg5}
we thus obtain finitely many integrals of the type
\begin{equation}\label{steg6}
\int_{\mathcal{V}} 
\frac{\debar \chi_1^{\epsilon}\wedge \cdots \wedge \debar 
\chi_k^{\epsilon}}{x^{\ell \alpha +\beta}} |vx^{\beta}|^{2\lambda}
\chi_{k+1}^{\epsilon} \cdots \chi_{q}^{\epsilon}
\bigwedge_{k+1}^nd\bar{x}_j \wedge \psi dx \, \Big|_{\lambda=0} 
\end{equation}
where $\psi dx=\xi'' \phi_2/(v u_{m+1}^{\ell_{m+1}}\cdots u_q^{\ell_q}\bar{u}_{m+1}\cdots \bar{u}_p)$
and $\ell \alpha=\sum_1^q\ell_j \alpha(j)$. Note that $\psi$ has compact support, and, perhaps after scaling,
we may assume it has support in the unit polydisc $\Delta$.

% \begin{equation*}
% \hspace{1cm} =:\int_{\mathcal{V}} \log |x|_{{\bf 1}_{\beta}} 
% \frac{\debar \chi_1^{\epsilon_1}\wedge \cdots \wedge \debar 
% \chi_k^{\epsilon_k} }{x^{\ell \alpha}}\chi_{k+1}^{\epsilon_{k+1}} \cdots \chi_{q}^{\epsilon_{q}}\psi'
% \bigwedge_{k+1}^nd\bar{x}_j \wedge dx,
% \end{equation*}
% where $\ell \alpha=\sum_1^q\ell_j \alpha(j)$ and $\psi'$ is a smooth compactly supported function 
% that we may, perhaps after scaling, assume to have support in the unit polydisc $\Delta$.
We now introduce the smoothing parameters
\begin{equation*}
t_j=|x^{\alpha(j)}|^2/\epsilon_j, \quad \textrm{for} \quad j=k+1,\ldots,q,
\end{equation*} 
and we put
\begin{equation*}
\Psi(\lambda,x,t_{k+1},\ldots,t_q) = |v|^{2\lambda}
\Pi_{k+1}^m\chi_{i}(t_{i})\cdot \Pi_{m+1}^q
\chi_{j}(t_{j}|u_{j}|^2)\cdot \psi(x).
\end{equation*}
The function $\Psi(\lambda,x,t)$ is smooth on $\C\times \C^n\times [0,\infty]^{q-k}$ and
by Lemma 6 in \cite{hasamJFA} it has the Taylor-like expansion 
\begin{eqnarray}\label{super6}
\Psi(\lambda,x,t) &=&
\sum_{K+L<\ell \alpha+\beta -{\bf 1}} x^K\bar{x}^L \Psi_{K,L}(\lambda,x,t) \\
& &
+\sum_{K+L=\ell \alpha+\beta -{\bf 1}} x^K\bar{x}^L \Psi_{K,L}(\lambda,x,t). \nonumber
\end{eqnarray}
When doing this expansion we consider $t=(t_{k+1},\ldots,t_q)$ as independent real variables and 
$\lambda$ as a parameter.
If $K+L<\ell \alpha +\beta - {\bf 1}$, the function $\Psi_{K,L}(\lambda,x,t)$ is independent 
of at least some coordinate $x_j$ and, moreover, we have the following explicit expression for the ``remainder''
part of the expansion:
\begin{equation}\label{super7}
\sum_{K+L=\ell \alpha +\beta -{\bf 1}} x^K\bar{x}^L \Psi_{K,L}(\lambda,x,t) \hspace{3cm}
\end{equation}
\begin{equation*}
\hspace{1cm} =\int_{y\in [0,1]^{n}}
\frac{({\bf 1}-y)^{\ell \alpha +\beta -{\bf 2}}}{(\ell \alpha +\beta -{\bf 2})!}
\frac{\partial^{|\ell \alpha +\beta -{\bf 1}|}}{\partial y^{\ell \alpha +\beta -{\bf 1}}}
\Psi(\lambda,y_1x_1,\ldots,y_nx_n,t)dy.
\end{equation*}
If we evaluate the smoothing parameters, i.e., let $t_j=|x^{\alpha(j)}|^2/\epsilon_j$, we have that
$|v|^{2\lambda}\chi_{k+1}^{\epsilon} \cdots \chi_q^{\epsilon} \psi= \Psi(\lambda,x,t)$ 
and we can substitute the decomposition \eqref{super6} of $\Psi$ into \eqref{steg6}.
By changing to polar coordinates and using that $\Psi_{K,L}(\lambda,x,t)$ is independent of some $x_j$
if $K+L<\ell \alpha +\beta -{\bf 1}$ it is 
not very hard to see that the first part of the expansion \eqref{super6} does not contribute;
see \cite{PCrelle} p.\ 47, 48, for details.
In polar coordinates, the integral \eqref{steg6} thus equals
\begin{equation}\label{steg7}
\int_{r\in [0,1]^n} \mathscr{J}(\lambda,r,t) r^{2\lambda\beta}
d\chi_1^{\epsilon}\wedge \cdots \wedge d\chi_k^{\epsilon}\wedge dr_{k+1}\wedge \cdots \wedge dr_n\, \Big|_{\lambda=0},
\end{equation}
where
\begin{equation*}
\mathscr{J}(\lambda,r,t)=c_{n,k}
\int_{\theta \in [0,2\pi)^n}\sum_{\stackrel{K+L=}{\ell \alpha+\beta -{\bf 1}}}
\Psi_{K,L}(\lambda,r,\theta, t)e^{i\theta \cdot (K-L-\ell \alpha-\beta +{\bf 1}_1^k})d\theta.
\end{equation*}
Here, $\chi_j^{\epsilon}=\chi_j(r^{2\alpha(j)}/\epsilon_j)$ for $j=1,\ldots,k$, 
$t_j=r^{2\alpha(j)}/\epsilon_j$ for $j=k+1,\ldots,q$, and $c_{n,k}=(-1)^{n^2}i^n2^{n-k}$.
Since the singularity has disappeared it is 
innocuous to put $\lambda=0$ in \eqref{steg7} and from now on we omit all occurrences of 
$\lambda$. We here also note the following properties of the function $\mathscr{J}(r,t)=\mathscr{J}(0,r,t)$. 
\begin{itemize}
\item[a)] $\mathscr{J}(r,t)$ is bounded on $[0,1]^n\times [0,\infty]^{q-k}$.
\item[b)] If some $t_j<\delta$ for some $k+1\leq j \leq q$ then $|\mathscr{J}(r,t)|\leq C\delta$.
\item[c)] If $t_j>1/\delta$ for some $k+1\leq j \leq p$ then
$|\mathscr{J}(r,t)|\leq C\delta$.
\item[d)] 
\begin{equation*}
|\mathscr{J}(r,t_{k+1},\ldots,t_{q-1},t_q)-
\mathscr{J}(r,t_{k+1},\ldots,t_{q-1},\infty)| \hspace{1cm}
\end{equation*}
\begin{equation*}
\hspace{5cm}
\leq
\begin{cases}
C\|\psi\|_{|\ell \alpha+\beta -{\bf 1}|} /t_q, & t_q \geq 1 \\
C\|\psi\|_{|\ell \alpha+\beta -{\bf 1}|}, & t_q \leq 1
\end{cases}
\end{equation*}
\end{itemize}
Property a) follows easily from formula \eqref{super7} and the definition of $\Psi(x,t)=\Psi(0,x,t)$.
Properties b) and c) follow by Taylor expanding $t_j \mapsto \chi_j(t_j|u_j|^2)$ 
at $0$ and $\infty$ respectively
in the definition of $\Psi(x,t)$. Property d) follows by Taylor expanding 
$t_q \mapsto \chi_j(t_q|u_q|^2)$ at $\infty$ in the definition of $\Psi(x,t)$ and inspection in the 
formula \eqref{super7}. We note also that the constant(s), $C$, depend on the 
$C^{|\ell \alpha +\beta -{\bf 1}|}$-norm
of $\chi_j$ if $u_j\neq 1$.
Now, by Fubini's theorem we may write
\eqref{steg7} as
\begin{equation}\label{steg8}
\int_{r''\in [0,1]^{n-k}}\Big(\int_{r'\in [0,1]^{k}}
\mathscr{J}(r',r'',t)d\chi_1^{\epsilon}\wedge\cdots \wedge d\chi_k^{\epsilon}\Big)
dr_{k+1}\wedge \cdots \wedge dr_n
\end{equation} 
and by property a), the modulus of the inner integral can be estimated by a constant times 
$\int_{s\in [0,\infty)^{k}}|d\chi_1(s_1)\wedge\cdots \wedge d\chi_k(s_k)| \leq C<\infty$ 
uniformly in all parameters. Thus, by Dominated Convergence, the study of possible limits of \eqref{steg7}, and hence of 
\eqref{steg1}, is reduced to the study of possible limits of the inner integral in \eqref{steg8} for fixed
$r''=(r_{k+1},\ldots,r_n)\in [0,1]^{n-k}$.
We note here also that 
$d\chi_1^{\epsilon}\wedge\cdots \wedge d\chi_k^{\epsilon}= \det A \cdot
\tilde{\chi}_1^{\epsilon}\cdots \tilde{\chi}_k^{\epsilon}dr_1\wedge \cdots \wedge dr_k/(r_1\cdots r_k)$,
where $A$ is the $k\times k$-matrix $(2\alpha(i)_j)_{i,j=1}^k$. We may thus assume that $A$ is invertible.

We want to estimate the difference 
$|\mathcal{I}_{f,\mu}^{\varphi}(\epsilon)-\lim_{\epsilon_q\rightarrow 0}
\mathcal{I}_{f,\mu}^{\varphi}(\epsilon)|$ and by our computations this far it can be estimated by a 
finite sum of terms like
\begin{equation*}
\int_{[0,1]^{n-k}}\Big(\int_{[0,1]^{k}}
|\mathscr{J}(r',r'',t)-\lim_{\epsilon_q\rightarrow 0}\mathscr{J}(r',r'',t)|\cdot
|d\chi_1^{\epsilon}\wedge\cdots \wedge d\chi_k^{\epsilon}| \Big)
dr''
\end{equation*}
\begin{equation*}
=\int_{[0,1]^{n-k}}\Big(\int_{[0,1]^{k}}
|\mathscr{J}(r',r'',t)-\mathscr{J}(r',r'',t)|_{t_q=\infty}|\cdot
|d\chi_1^{\epsilon}\wedge\cdots \wedge d\chi_k^{\epsilon}| \Big)
dr''.
\end{equation*}
The equality $\lim_{\epsilon_q\rightarrow 0}\mathscr{J}(r',r'',t)=\mathscr{J}(r',r'',t)|_{t_q=\infty}$
holds for each fixed $r''\in (0,1]^{n-k}$ because $t_q=r^{2\alpha(q)}/\epsilon_q$ and,
since $\mathcal{K}\subseteq \{k+1,\ldots,n\}$, we have 
$\alpha(q)_1=\cdots =\alpha(q)_k=0$. By property d) we can therefore estimate
$|I_{f,\mu}^{\varphi}(\epsilon)-\lim_{\epsilon_q\rightarrow 0}
I_{f,\mu}^{\varphi}(\epsilon)|$ by a finite sum of terms of the form
\begin{equation*}
C\|\psi\|_{|\ell\alpha +\beta -{\bf 1}|}\big(
\int_{\stackrel{r''\in [0,1]^{n-k}}{r^{2\alpha(q)}\leq \epsilon_q}} dr''+
\int_{\stackrel{r''\in [0,1]^{n-k}}{r^{2\alpha(q)}\geq \epsilon_q}} \epsilon_q/r^{2\alpha(q)}dr''\big) \hspace{2cm}
\end{equation*}
\begin{equation*}
\leq C\|\psi\|_{|\ell\alpha +\beta -{\bf 1}|}\epsilon_q^{\omega}.
\end{equation*}
The last estimate follows from Lemmas 9 and 10 in \cite{hasamJFA}, from which we also see that any
$\omega < 1/(2|\alpha(q)|)$ works. To conclude the proof we just have to note that $\psi$
depends continuously on $\varphi$ in $C^m$-norm if $m$ is sufficiently large.
\end{proof}

\subsection{The division lemma}
We keep the notation from the proof of Proposition \ref{nyckelprop} so that 
$\mathcal{V}\to V\subseteq \B$ is a modification, the pull-back under this map is denoted by $\hat{\cdot}$,
and $x$ are local coordinates on $\mathcal{V}$ such that $\hat{f}_j=x^{\alpha(j)}$, $1\leq j \leq m$,
and $\hat{f}_j=u_jx^{\alpha(j)}$, $m+1 \leq j \leq q$. 
We recall also that our set-up in the proof of Proposition \ref{nyckelprop}
implies that the exterior product of $\wedge_1^mdx^{\alpha(i)}$ with any $dx^{\alpha(j)}$,
$m+1\leq j\leq p$, is zero. 

\begin{lemma}\label{divlemma}
Let $\mathcal{K}\subseteq \{1,\ldots,n\}$ be any set of indices $i$ such that $x_i$ divides some 
$x^{\alpha(j)}$ with $p+1\leq j \leq q$.
If $\sigma$ is an anti-holomorphic $n-p$-form in $\B$, then one can find, in the $x$-chart on $\mathcal{V}$,  
an anti-holomorphic $n-m$-form $\xi$ that depends continuously on $\sigma$ in
any $C^k$-norm and such that
\begin{itemize}
\item[i)] $\frac{d\bar{x}_j}{\bar{x}_j}\wedge \xi$ 
is non-singular for all $j\in \mathcal{K}$, and
\item[ii)] $d\bar{x}^{\alpha(1)}\wedge\cdots\wedge d\bar{x}^{\alpha(m)}\wedge (
d\bar{u}_{m+1}\wedge\cdots\wedge d\bar{u}_p\wedge\hat{\sigma}-\xi)=0$.
\end{itemize}  
\end{lemma}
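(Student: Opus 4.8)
The plan is to recast the two conditions in a coordinate-free way and then build $\xi$ by a division process. Write $\eta:=d\bar{u}_{m+1}\wedge\cdots\wedge d\bar{u}_p\wedge\hat{\sigma}$ for the given antiholomorphic $(n-m)$-form and $\Omega:=d\bar{x}^{\alpha(1)}\wedge\cdots\wedge d\bar{x}^{\alpha(m)}$ for the antiholomorphic $m$-form in ii). First I would observe that for any antiholomorphic $(n-m)$-form $\tau=\sum_{|L|=n-m}\tau_Ld\bar{x}_L$ and any index $i$, the pullback $\iota_i^*\tau$ under the inclusion $\iota_i\colon\{x_i=0\}\hookrightarrow\mathcal{V}$ is obtained by deleting the terms containing $d\bar{x}_i$ and restricting the remaining coefficients to $\{\bar{x}_i=0\}$. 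Hence condition i) --- that $\bar{x}_i$ divide every $\tau_L$ with $i\notin L$ --- is exactly the requirement $\iota_i^*\xi=0$ for every $i\in\mathcal{K}$. Thus the lemma asks for a representative of $\eta$ modulo the annihilator of $\Omega$ whose pullback to each coordinate hypersurface $\{x_i=0\}$, $i\in\mathcal{K}$, vanishes.

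Next I would determine this annihilator. Factoring $\Omega=\bar{x}^{A}\omega$, where $A=\alpha(1)+\cdots+\alpha(m)$ and $\omega=\bigwedge_{i=1}^m(d\bar{x}^{\alpha(i)}/\bar{x}^{\alpha(i)})$ has constant coefficients in the logarithmic frame $\{d\bar{x}_l/\bar{x}_l\}$, and using that $\bar{x}^A$ is generically nonzero, condition ii) is equivalent to $\omega\wedge(\eta-\xi)=0$. Because $\alpha(1),\ldots,\alpha(m)$ are linearly independent, $\omega$ is a decomposable $m$-form whose factors span a rank-$m$ subspace of the log-frame; completing them to a frame shows that the annihilator of $\omega$ on $(n-m)$-forms is precisely the exterior ideal generated by $d\bar{x}^{\alpha(1)},\ldots,d\bar{x}^{\alpha(m)}$. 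In particular ii) fixes a single scalar datum of $\xi$ and leaves all directions inside this ideal free; these are the degrees of freedom I will spend to arrange i).

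The construction of $\xi$ then proceeds by successive division. Processing the indices $i\in\mathcal{K}$ one at a time, I start from $\eta$ and, at each step, Taylor expand the current form in $\bar{x}_i$, split off the part whose pullback to $\{x_i=0\}$ is non-zero, and subtract a correction term lying in the ideal generated by the $d\bar{x}^{\alpha(t)}$ (hence in the annihilator of $\Omega$, so that ii) is preserved) chosen to cancel that pullback; one then checks that this correction does not reintroduce a non-zero pullback along the hyperplanes already treated. Since every operation is a restriction, a Taylor division, or a finite linear combination with coefficients built from the fixed exponents $\alpha(t)$ and the units $u_j$, the resulting $\xi$ depends continuously on $\sigma$ in every $C^k$-norm, as required.

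The main obstacle is precisely the solvability of each division step --- i.e., that the non-vanishing pullback to be cancelled really lies in the pullback of the annihilator ideal. This is where the hypothesis that $(f_1,\ldots,f_p,f_j)$ defines a complete intersection on $V$ must be used: for $i\in\mathcal{K}$ we have $x_i\mid x^{\alpha(j)}$ for some $p+1\le j\le q$, so $\pi$ maps $\{x_i=0\}$ into $\{f_j=0\}\cap V$, and the complete intersection assumption forces the image to have the expected codimension. Combined with the differentials $d\bar{f}_1,\ldots,d\bar{f}_p$ encoded in $\Omega$ and the factors $d\bar{u}_{m+1},\ldots,d\bar{u}_p$, this drop in dimension raises the effective antiholomorphic degree against an image of dimension at most $n-1$, so that the relevant pullback of $\hat{\sigma}$ degenerates on $\{x_i=0\}$ for dimension reasons --- exactly the mechanism used in the dimension count in the proof of Proposition~\ref{destreckprop}, where vanishing of an antiholomorphic form of degree exceeding the dimension of its image produced divisibility by the corresponding $\bar{x}_i$. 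Establishing this vanishing, and bookkeeping that the corrections for distinct $i\in\mathcal{K}$ remain mutually compatible, is the heart of the argument.
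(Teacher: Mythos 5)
Your reduction of i) to the vanishing of the pullbacks $\iota_i^*\xi$ for $i\in\mathcal{K}$, and your identification of the complete-intersection dimension count as the engine behind ii), both match the paper's proof in spirit: the paper takes $\xi=\Psi-\tilde{\xi}$ with $\Psi=d\bar{u}_{m+1}\wedge\cdots\wedge d\bar{u}_p\wedge\hat{\sigma}$ and $\tilde{\xi}=\sum_{\emptyset\neq J\subseteq\mathcal{K}}(-1)^{|J|+1}\Psi_J$ the inclusion--exclusion sum of trivial extensions of the pullbacks of $\Psi$ to the coordinate planes $\{x_J=0\}$; this single explicit formula settles i) by an induction over $|\mathcal{K}|$, gives the $C^k$-continuity in $\sigma$ for free, and disposes of the ``mutual compatibility'' of corrections that you leave as a worry. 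But your proposal stops short of the actual content of the lemma: you never exhibit the correction terms, and the solvability of each division step --- which you yourself call the heart of the argument --- is only asserted ``for dimension reasons.''

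Concretely, what is missing is the proof that $d\bar{x}^{\alpha(1)}\wedge\cdots\wedge d\bar{x}^{\alpha(m)}\wedge\Psi_{i_1\cdots i_\ell}=0$ for every multi-index drawn from $\mathcal{K}$. Your sketch invokes only one $f_j$ with $j>p$ and a loose degree count against ``an image of dimension at most $n-1$''; that is not enough. The paper's argument introduces the set $L$ of indices $j\le p$ for which no $x_{i_k}$ divides $\hat{f}_j$, observes that $\{x_{i_1}=\cdots=x_{i_\ell}=0\}$ maps into the intersection of $V$ with $\{f_j=0\}$ for all $j\in\{1,\ldots,p\}\setminus L$ and with some $\{f_\nu=0\}$, $\nu>p$, hence into a set of dimension at most $n-p+|L|-1$ by the complete intersection hypothesis, so that $\bigwedge_{j\in L}d\bar{f}_j\wedge\sigma$, of antiholomorphic degree $n-p+|L|$, pulls back to zero there. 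Passing from this vanishing to the desired identity is then a genuinely nontrivial exterior-algebra step: expanding $d(\bar{u}_j\bar{x}^{\alpha(j)})$ for $j\in L''=\{j\in L:\, j>m\}$ produces extra terms in which a differential hits $\bar{x}^{\alpha(j)}$; these are eliminated only after wedging with the remaining $d\bar{x}^{\alpha(j)}$, $j\le m$, $j\notin L$, and invoking the relation $d\bar{x}^{\alpha(1)}\wedge\cdots\wedge d\bar{x}^{\alpha(m)}\wedge d\bar{x}^{\alpha(\nu)}=0$ for $m<\nu\le p$; and one must finally cancel the antiholomorphic monomial $\bar{x}^{\sum_{i\in L''}\alpha(i)}$, which is legitimate only because the identity holds identically. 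None of this appears in your proposal, so as written it is a correct plan rather than a proof.
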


\begin{proof}
Put $\Psi=d\bar{u}_{m+1}\wedge\cdots\wedge d\bar{u}_p\wedge\hat{\sigma}$ and define
\begin{equation*}
\tilde{\xi}=\sum_{j\in \mathcal{K}}\Psi_{j}-\sum_{\stackrel{i,j\in \mathcal{K}}{i<j}}\Psi_{ij}+\cdots
+(-1)^{|\mathcal{K}|-1}\Psi_{i_1\cdots i_{|\mathcal{K}|}},
\end{equation*}
where $\Psi_{i_1\cdots i_{\ell}}$ means that we pull $\Psi$  back to $\{x_{i_1}=\cdots=x_{i_{\ell}}=0\}$
and extend trivially to $\C^n$, (i.e, $\Psi_{i_1\cdots i_{\ell}}=\tau^*\Psi$, where $\tau$ is the composition
of the standard projection $\C^n\to \Lambda=\{x_{i_1}=\cdots=x_{i_{\ell}}=0\}$ and the inclusion 
$\Lambda \hookrightarrow \C^n$).
A straight forward induction over $|\mathcal{K}|$ shows that $\xi:=\Psi-\tilde{\xi}$
satisfies i); see, e.g., \cite{hasamArkiv}. To see that $\xi$ satisfies ii), consider 
a $\Psi_{i_1\cdots i_{\ell}}$. Let $L\subseteq \{1,\ldots,p\}$ be the set of indices $j$ such that no $x_{i_{k}}$,
$1\leq k \leq \ell$, divides $\hat{f}_j$ and write $L=L'\cup L''$, where $L'=\{j\in L;\, j\leq m\}$
and $L''=\{j\in L;\, m+1\leq j\leq p\}$. 

Now, the variety $\{x_{i_1}=\cdots=x_{i_{\ell}}=0\}$ lies
in the zero set of all the $\hat{f}_j$ with $j\in \{1,\ldots,p\}\setminus L$ by the definition of $L$, 
and moreover, it is contained
in the zero set of (at least) some $\hat{f}_{\nu}$ with $p+1\leq \nu \leq q$ since the $x_{i_j}$
are in $\mathcal{K}$. Thus, $\{x_{i_1}=\cdots=x_{i_{\ell}}=0\}$ is contained in the preimage
of a subvariety of $V$ of dimension at most $n-p+|L|-1$.
The form $\wedge_{j\in L}d\bar{f}_j\wedge \sigma$
has degree $n-p+|L|$ and so its pullback to this variety must vanish. Hence,
\begin{equation*}
\widehat{\bigwedge_{j\in L}d\bar{f}_j\wedge \sigma}=
\bigwedge_{j\in L}d\hat{\bar{f}}_j \wedge \hat{\sigma}=
\bigwedge_{i\in L'}d\bar{x}^{\alpha(i)} \wedge \bigwedge_{j\in L''}d(\bar{u}_j\bar{x}^{\alpha(j)}) \wedge \hat{\sigma}
\end{equation*}
has a vanishing pullback to $\{x_{i_1}=\cdots=x_{i_{\ell}}=0\}$.
But this means that 
\begin{equation*}
\bar{x}^{\sum_{i\in L''}\alpha(i)}\bigwedge_{j\in L'} d\bar{x}^{\alpha(j)}\wedge
(\bigwedge_{k\in L''}d\bar{u}_{k}\wedge \hat{\sigma})_{i_1\cdots i_{\ell}}
\end{equation*}
\begin{equation*}
+\bigwedge_{\iota\in L'} d\bar{x}^{\alpha(\iota)}\wedge
\sum_{\nu\in L''}d\bar{x}^{\alpha(\nu)}\wedge \tau_{\nu}=0,
\end{equation*}
where the first term arises when no differential hits $\bar{x}^{\alpha(j)}$, $j\in L''$. Taking 
the exterior product with $\wedge_{j\notin L''}(d\bar{u}_j)_{i_1\cdots i_{\ell}}$ we obtain
\begin{equation*}
\bar{x}^{\sum_{i\in L''}\alpha(i)}\bigwedge_{j\in L'} d\bar{x}^{\alpha(j)}\wedge
\Psi_{i_1\cdots i_{\ell}}+\bigwedge_{\iota\in L'} d\bar{x}^{\alpha(\iota)}\wedge
\sum_{\nu\in L''}d\bar{x}^{\alpha(\nu)}
\wedge\tilde{\tau}_{\nu}=0.
\end{equation*}
We now multiply this equation with the exterior product of all $d\bar{x}^{\alpha(j)}$ with 
$j\leq m$ and $j\notin L'$. Then we get $d\bar{x}^{\alpha(1)}\wedge \cdots \wedge d\bar{x}^{\alpha(m)}$ in front
of the sum and this makes all terms under the summation sign disappear by the comment just before Lemma
\ref{divlemma}. It thus follows that 
\begin{equation*}
\bar{x}^{\sum_{i\in L''}\alpha(i)}d\bar{x}^{\alpha(1)}\wedge \cdots \wedge d\bar{x}^{\alpha(m)}
\wedge \Psi_{i_1\cdots i_{\ell}}=0,
\end{equation*}
and since this holds everywhere we may remove the factor $\bar{x}^{\sum_{i\in L''}\alpha(i)}$ and conclude that 
$\xi$ has the property ii).
\end{proof}

\section{Non-characteristic restrictions}\label{ickekarsektion}
Let $\Omega\subseteq \mathbb{R}^k$ be an open set, let $u\in \mathscr{D}'(\Omega)$,
and let $M\subseteq \Omega$ be a smooth submanifold.
Let also $\mathcal{N}(M)$ be the subbundle of $T^*(\Omega)\mid_M$ of covectors that annihilate $T(M)$.
We say that $M$ is non-characteristic for $u$ if $\mathcal{N}(M)\cap WF (u)=\emptyset$, where
$WF(u)$ is the wave front set of $u$. If $M$ is non-characteristic for $u$, then there is a well
defined ``restriction'', $u|_M$, of $u$ to $M$ and moreover, if $u_{\epsilon}\to u$ is any smooth regularization
of $u$ and $i\colon M \to \Omega$ is the inclusion map, then $i^* u_{\epsilon} \to u|_M$ is a 
regularization of $u|_M$; see \cite{Hormander1} Chapter VIII.

Let $\mu\in \CH_V(X)$, where $X\subseteq \C^n$ is a domain and $V$ is a pure dimensional analytic subset.
Then, since $\mu$ generate a regular holonomic $\mathscr{D}_X$-module, \cite{JEBabel},
it follows from a deep result of Andronikof, \cite{And}, that $WF(\mu) = WF_A(\mu)$ is a 
$\C^*$-conic complex Lagrangian subset of $T^*(X)$. Thus, by the Morse-Sard theorem, there are 
``many'' non-characteristic hypersurfaces for $\mu$, e.g., in appropriate coordinates, $x$, 
centered at an arbitrary point in $X$, all 
$H_{j,s}=\{x_j=s\}$, $s\in \C$, $0<|s|<1$, are non-characteristic for $\mu$. 
Now, let $f=(f_1,\ldots,f_p)$ be a holomorphic tuple defining a complete intersection
in $X$ and let $R^f=\debar (1/f_1)\wedge \cdots \wedge \debar (1/f_p)$ 
be the Coleff-Herrera product.
\begin{theorem}\label{icke-kar}
Let $Y\subseteq X$ be a complex submanifold that is non-characteristic for $\mu$ and such that
$f_1|_Y,\ldots,f_p|_Y$ define a complete intersection on $Y$. Then
\begin{equation*}
R^f|_Y=\debar \frac{1}{f_1|_Y}\wedge \cdots \wedge \debar \frac{1}{f_p|_Y},
\end{equation*}
i.e., the Coleff-Herrera product commutes with non-characteristic restrictions.
\end{theorem}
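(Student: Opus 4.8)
The plan is to realise $R^f$ as the unrestricted limit of an explicit family of smooth forms, to restrict that family to $Y$, and to recognise the restricted family as the analogous regularising family on $Y$, whose limit is $\debar(1/f_1|_Y)\wedge\cdots\wedge\debar(1/f_p|_Y)$. The whole point is that for a non-characteristic submanifold, restriction commutes with passage to the limit of a (wave-front controlled) smooth regularization, and Theorem \ref{huvudsats} supplies exactly such a regularization simultaneously on $X$ and on $Y$.

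First I would note that $R^f$ is itself a Coleff--Herrera current, i.e.\ a section of $\CH_Z$ with $Z=f^{-1}(0)$ of codimension $p$; hence the discussion preceding the theorem applies with $\mu=R^f$, so that $WF(R^f)$ is a $\C^*$-conic Lagrangian set, which by hypothesis avoids $\mathcal{N}(Y)$, and $R^f|_Y$ is well defined. Choosing $\chi_j$ to be smooth regularizations of the characteristic function of $[1,\infty)$ and writing $\chi_j^{\epsilon}=\chi_j(|f_j|^2/\epsilon_j)$, Theorem \ref{huvudsats} (with $V=X$, $\mu=1$, $\ell_j=1$) shows that the smooth $(0,p)$-forms
\begin{equation*}
R^f_{\epsilon}:=\debar\frac{\chi_1^{\epsilon}}{f_1}\wedge\cdots\wedge\debar\frac{\chi_p^{\epsilon}}{f_p}
\end{equation*}
converge unrestrictedly to $R^f$, the H\"older estimate there exhibiting this as a bona fide regularization.

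Letting $i\colon Y\hookrightarrow X$ be the inclusion, I would then use that pull-back commutes with $\debar$, with wedge products, and with multiplication by smooth functions, together with $f_j\circ i=f_j|_Y$, to compute
\begin{equation*}
i^*R^f_{\epsilon}=\debar\frac{\chi_1(|f_1|_Y|^2/\epsilon_1)}{f_1|_Y}\wedge\cdots\wedge\debar\frac{\chi_p(|f_p|_Y|^2/\epsilon_p)}{f_p|_Y}.
\end{equation*}
Since $(f_1|_Y,\ldots,f_p|_Y)$ defines a complete intersection on $Y$, Theorem \ref{huvudsats} applied now on $Y$ shows that this family converges unrestrictedly, as $\epsilon\to 0$, to $\debar(1/f_1|_Y)\wedge\cdots\wedge\debar(1/f_p|_Y)$. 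On the other hand, by the compatibility of non-characteristic restriction with smooth regularizations recalled in the preamble, $i^*R^f_{\epsilon}\to R^f|_Y$. Comparing the two limits of one and the same family and invoking uniqueness of the unrestricted limit gives $R^f|_Y=\debar(1/f_1|_Y)\wedge\cdots\wedge\debar(1/f_p|_Y)$, which is the assertion.

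The step I expect to be the main obstacle is the justification that $i^*R^f_{\epsilon}\to R^f|_Y$, i.e.\ that $R^f_{\epsilon}$ qualifies as a regularization to which the restriction theorem applies. Weak convergence alone does not suffice: an arbitrary smooth family converging weakly to $R^f$ can fail to restrict to $R^f|_Y$ once its high-frequency content leaks into $\mathcal{N}(Y)$. What is genuinely needed is convergence in the wave-front topology $\mathscr{D}'_{\Gamma}$ for a closed cone $\Gamma\supseteq WF(R^f)$ disjoint from $\mathcal{N}(Y)$, in the sense of \cite{Hormander1}; one must therefore control the wave-front sets of the smooth forms $R^f_{\epsilon}$ uniformly in $\epsilon$ and keep them away from $\mathcal{N}(Y)$ in the limit. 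Here the explicit structure of $R^f_{\epsilon}$---each factor $\debar(\chi_j^{\epsilon}/f_j)$ being built from the single holomorphic function $f_j$---should pin the limiting wave-front directions to the Lagrangian set $WF(R^f)$, which by hypothesis misses $\mathcal{N}(Y)$; making this precise is the technical heart of the argument.
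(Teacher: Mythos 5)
Your proposal is correct and takes essentially the same route as the paper: the paper's proof is exactly the chain $R^f|_Y=\lim_{\epsilon\to 0} i^*\bigl(\debar\chi_1^{\epsilon}\wedge\cdots\wedge\debar\chi_p^{\epsilon}/(f_1\cdots f_p)\bigr)=\debar(1/f_1|_Y)\wedge\cdots\wedge\debar(1/f_p|_Y)$, combining Theorem \ref{huvudsats} applied on $X$ and then on $Y$ with the non-characteristic restriction property recalled in the preamble. The wave-front-topology caveat you raise at the end (that weak convergence alone would not suffice and one needs convergence in the sense of H\"{o}rmander's $\mathscr{D}'_{\Gamma}$) is a fair point, but the paper passes over it just as quickly, simply invoking the statement that $i^*u_{\epsilon}\to u|_M$ for any smooth regularization.
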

\begin{proof}
This follows immediately from our main theorem since
\begin{equation*}
R^f|_Y=\lim_{\epsilon\to 0} 
i^* \Big( \frac{\debar \chi_1^{\epsilon}\wedge \cdots \wedge \debar \chi_p^{\epsilon}}{f_1\cdots f_p}\Big) =
\debar \frac{1}{f_1|_Y}\wedge \cdots \wedge \debar \frac{1}{f_p|_Y},
\end{equation*}
where $i\colon Y\to X$ is the inclusion.
\end{proof}

\begin{remark}
To prove this theorem it is sufficient to use Passare regularizations. In fact, one only has to 
ensure that $\epsilon \to 0$ inside Passare sectors for both of the tuples $(f_1,\ldots,f_p)$ and 
$(f_1|_Y,\ldots,f_p|_Y)$.
\end{remark}

We conclude this section with an application of Theorem \ref{icke-kar}. Formally computing the average
of $I_f^{\varphi}(t)$ (cf., the introduction) 
for $t\in \{\xi\in \mathbb{R}^p_+; \sum_j \xi_j =\epsilon\}$ one obtains
\begin{equation*}
c_p\int_{|f|^2=\epsilon}\frac{\sum_j (-1)^{j-1}\bar{f}_j \bigwedge_{k\neq j}d\bar{f}_k}{|f|^{2p}}\wedge \varphi,
\end{equation*} 
where $c_p=(-1)^{p(p-1)/2}(p-1)!/\sqrt{p}$. Using Hironaka's theorem and toric resolutions one can show that
the limit of this integral exists and defines a Bochner-Martinelli type current $R^f_{BM}\in \CH_{f^{-1}(0)}$
that can be regularized by 
\begin{equation*}
\debar \chi (|f|^2/\epsilon)\wedge c_p
\frac{\sum_j (-1)^{j-1}\bar{f}_j \bigwedge_{k\neq j}d\bar{f}_k}{|f|^{2p}},
\end{equation*}
see, e.g., \cite{PTY} and \cite{hasamJFA}. If the test form $\varphi$ is $\debar$-closed in a 
neighborhood of $f^{-1}(0)$, then $I_f^{\varphi}(t)$ is independent of sufficiently small $t$ and the 
formal computation above becomes rigorous . However, it is a non-trivial, but well known fact that actually
$R^f=R^f_{BM}$, see \cite{PTY}, \cite{matsa2}. In fact, following \cite{matsa2}, $R^f$ and $R^f_{BM}$ 
are $\nabla$-cohomologous, (see, e.g., Section \ref{bevissektion} 
for the definition of the $\nabla$-operator) and, unlike the $\debar$-operator, 
the $\nabla$-operator admits localizations, i.e., there are $\nabla$-closed ``cut off forms''. 
Thus, one can find a current $\mu$ with support close to $f^{-1}(0)$ such that $\nabla \mu = R^f-R^f_{BM}$
and this yields a current $\mu'$, supported close to $f^{-1}(0)$, such that $\debar \mu' =R^f-R^f_{BM}$.
This implies that $R^f=R^f_{BM}$ since a $\CH_{f^{-1}(0)}$-current cannot be $\debar$-exact in this sense
unless it is $0$.  

By Theorem \ref{icke-kar} we get an independent proof of the equality $R^f=R^f_{BM}$
based on induction over $n$. In the the absolute case, i.e., when $p=n$,
it is easy to verify the equality by Taylor expanding the test form at the discrete set of points of $f^{-1}(0)$; 
we may assume that $f^{-1}(0)=0$ and then both $R^f$ and $R^f_{BM}$ are annihilated by any $\bar{x}_j$ 
and we know that they coincide on holomorphic monomials.  
Theorem \ref{icke-kar} provides us with the induction step since the set of non-characteristic hyperplanes,
$H_{j,s}$, is sufficiently ample. We thus get
\begin{corollary}
The Bochner-Martinelli type current $R^f_{BM}$ is equal to the Coleff-Herrera product $R^f$ in the 
case of a complete intersection.
\end{corollary}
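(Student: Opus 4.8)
The plan is to prove $R^f=R^f_{BM}$ by induction on the dimension $n$ of $X$, using Theorem \ref{icke-kar} to strip off one dimension at a time and taking the zero-dimensional (``absolute'') case $p=n$ as the base of the induction. The two ingredients I need at each stage are a restriction formula for each of the two currents and a way to recognize a Coleff--Herrera current from its non-characteristic slices.

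For the base case I would assume, after localizing, that $f^{-1}(0)=\{0\}$, so that both $R^f$ and $R^f_{BM}$ are $(0,n)$-currents supported at the origin and lying in $\CH_{\{0\}}$; they pair with $(n,0)$-test forms $\psi\,dz$. Since the ideal of the origin is generated by the coordinates $z_1,\dots,z_n$, property 2) of Definition \ref{CHdef} gives $\bar z_j\cdot R^f=\bar z_j\cdot R^f_{BM}=0$ for every $j$, so the action of either current on $\psi\,dz$ depends only on the holomorphic Taylor coefficients of $\psi$ at the origin. Hence both currents are determined by their values on the holomorphic monomials $z^\alpha$, and these values agree by the classical moment/transformation-law computation; thus $R^f=R^f_{BM}$ when $p=n$.

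For the induction step, assume the corollary in all dimensions $<n$ and let $p<n$. Because $R^f$ generates a regular holonomic $\mathscr{D}_X$-module its wave front set is Lagrangian, so by Morse--Sard the slices $Y=H_{j,s}$ are non-characteristic for $R^f$ for almost every small $s\neq 0$, and for generic such $Y$ the restrictions $f_1|_Y,\dots,f_p|_Y$ still define a complete intersection on $Y$. Theorem \ref{icke-kar} then gives $R^f|_Y=R^{f|_Y}$. Next I would verify the analogous formula for the Bochner--Martinelli current: the pullback $i^*$ carries the regularizing form $\debar\chi(|f|^2/\epsilon)\wedge c_p\,\big(\sum_j(-1)^{j-1}\bar f_j\bigwedge_{k\neq j}d\bar f_k\big)/|f|^{2p}$ exactly to the corresponding Bochner--Martinelli regularizer built from $f|_Y$ (since $i^*$ commutes with $d$ and conjugation, $i^*|f|^2=|f|_Y|^2$, and the constant $c_p$ depends only on $p$), and because $Y$ is non-characteristic the pulled-back smooth regularizations converge to the restriction; hence $R^f_{BM}|_Y=R^{f|_Y}_{BM}$. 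By the induction hypothesis $R^{f|_Y}=R^{f|_Y}_{BM}$, so $R^f|_Y=R^f_{BM}|_Y$ for every admissible slice $Y$.

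The step I expect to be the crux is concluding $R^f=R^f_{BM}$ from the agreement of their non-characteristic restrictions on the ample family $\{H_{j,s}\}$. Here I would set $\tau:=R^f-R^f_{BM}\in\CH_{f^{-1}(0)}$ and argue that a Coleff--Herrera current all of whose slices $\tau|_{H_{j,s}}$ vanish must itself vanish. Concretely, I would disintegrate the action of $\tau$ on a top-degree test form into an integral over the slices $H_{j,s}$ --- legitimate because $WF(\tau)$ avoids the conormals of the slices for a full-measure set of $s$ --- so that the vanishing of every slice forces $\tau.\varphi=0$; equivalently, using the local representation \eqref{DLrep}, agreement of the transverse restrictions forces agreement of the holomorphic data $Q,\vartheta,h$ representing the two currents. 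Making this disintegration rigorous --- in particular arranging that enough slices are simultaneously non-characteristic and complete-intersection-preserving to recover an arbitrary test form --- is the main obstacle; everything else reduces to the two restriction formulas already in hand.
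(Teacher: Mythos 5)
Your proposal follows essentially the same route as the paper: induction on $n$, with the absolute case $p=n$ handled by Taylor expansion of the test form at the discrete zero set (both currents being annihilated by the $\bar z_j$ and agreeing on holomorphic monomials), and the induction step supplied by Theorem \ref{icke-kar} together with the fact that the family of non-characteristic hyperplanes $H_{j,s}$ is sufficiently ample to determine a Coleff--Herrera current. Your additional observations --- that one must also check $R^f_{BM}|_Y=R^{f|_Y}_{BM}$ via the pullback of the Bochner--Martinelli regularizer, and that the ``ampleness'' step is the one requiring care --- are exactly the points the paper leaves implicit, so the proposal is a correct and faithful elaboration of the paper's argument.
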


\begin{remark}
For the currents of Cauchy-Fantappi\`{e}-Leray type introduced in \cite{matsa} we have the same result.
\end{remark}

% \bigskip

% $f$, $g$ complete intersection in $\C^n$, $n\geq 3$. Put $\mu=\debar(1/f)\wedge \debar(1/g)$. 
% We know that $WF_A(\mu)$ is a complex lagrangian, hence lots of non-characteristic hyperplanes, e.g.,
% $\{x_j=s\}$ for $s\in \Delta\setminus \{0\}$. According to H\"{o}rmander, $\mu\mid_{\{x_j=s\}}$ is well defined.
% (Note: if $\{f=g=0\}\cap \{x_j=0\}$ has codimension $3$ then $\{f=g=0\}\cap \{x_j=s\}$ has codimension $3$
% for small $s$ by "properness"...)

% Get the following result, which also is proved in "Passare-Tsikh-Yger" and "MatsA" 

% \begin{corollary}
% The Bochner-Martinelli current is equal to the Coleff-Herrera current.
% \end{corollary}
% \begin{proof}
% Obvious in the absolut case. Can reduce to this case by Theorem \ref{icke-kar}.
% \end{proof}

\section{Proof and extensions of Theorem \ref{huvudsats}}\label{bevissektion}

\begin{proof}[Proof of Theorem \ref{huvudsats}]
The proof is based on induction over $p$. The induction start, $p=0$, follows immediately 
from Proposition \ref{nyckelprop} (and an obvious induction over $q$). Now assume that 
Theorem \ref{huvudsats} is true for $p=r-1\geq 0$.
By induction over $q$, what we have to show is that 
\begin{equation}\label{bevisekv}
\Big|\frac{\debar \chi^{\epsilon}_1\wedge \cdots \wedge \debar \chi^{\epsilon}_r}{
f_1^{\ell_1}\cdots f_r^{\ell_p}}\wedge \tilde{\mu}. \varphi - 
\debar \frac{1}{f_1^{\ell_1}}\wedge \cdots \wedge \debar \frac{1}{f_r^{\ell_r}}\wedge \tilde{\mu}. \varphi \Big|
\leq C\epsilon^{\omega}\|\varphi \|_M,
\end{equation}
where $\tilde{\mu}=(1/(f_{r+1}^{\ell_{r+1}}\cdots f_q^{\ell_q}))\mu \in \CH_V[*S]$, $S:=\{f_{r+1}\cdots f_q=0\}$.
An easy set-theoretic computation shows that $(f_1,\ldots f_r)$ defines a complete intersection 
on $V\cap S$ and by Proposition \ref{destreckprop} we have $\debar \tilde{\mu} \in \CH_{V\cap S}$.
The left hand side of \eqref{bevisekv} can be estimated by
\begin{equation*}
\Big|\frac{\debar \chi^{\epsilon}_\wedge \cdots \wedge \debar \chi^{\epsilon}_{p} \chi^{\epsilon}_r}{
f_1^{\ell_1}\cdots f_r^{\ell_r}}\wedge \tilde{\mu}. \,\debar \varphi - 
\debar \frac{1}{f_1^{\ell_1}}\wedge \cdots \wedge \debar \frac{1}{f_{p}^{\ell_{p}}}\frac{1}{f_r^{\ell_r}}
\wedge \tilde{\mu}. \,\debar \varphi \Big| + \hspace{1cm}
\end{equation*}
\begin{equation*}
+\Big|\frac{\debar \chi^{\epsilon}_1\wedge \cdots \wedge \debar \chi^{\epsilon}_{p}\chi^{\epsilon}_r}{
f_1^{\ell_1}\cdots f_r^{\ell_r}}\wedge \debar \tilde{\mu}. \, \varphi - 
\debar \frac{1}{f_1^{\ell_1}}\wedge \cdots \wedge \debar \frac{1}{f_{p}^{\ell_{p}}}\frac{1}{f_r^{\ell_r}}
\wedge \debar \tilde{\mu}. \,\varphi \Big|.
\end{equation*}
By induction, the last term can be estimated by $C \epsilon^{\omega} \|\varphi \|_{M}$. The first term can, 
by Proposition \ref{nyckelprop}, be estimated by $C\epsilon_r^{\omega}\|\debar \varphi\|_{M}$ plus
\begin{equation*}
\Big|\frac{\debar \chi^{\epsilon}_\wedge \cdots \wedge \debar \chi^{\epsilon}_{p}}{
f_1^{\ell_1}\cdots f_{p}^{\ell_{p}}}\wedge \mu'. \,\debar \varphi - 
\debar \frac{1}{f_1^{\ell_1}}\wedge \cdots \wedge \debar \frac{1}{f_{p}^{\ell_{p}}}\wedge 
\mu'. \,\debar \varphi \Big|,
\end{equation*}
where $\mu'=(1/f_r^{\ell_r})\tilde{\mu}\in \CH_V[*S']$, $S'=S\cup f_r^{-1}(0)$.
Another ``integration by parts'' and the induction hypothesis finally shows that this term can be 
estimated by $C \epsilon^{\omega} \|\debar \varphi \|_{M}$ and we are done.
\end{proof}

\smallskip

Finally we present some extensions of our main theorem. By going through the proof one verifies the following.
Let $\tilde{f}=(\tilde{f}_1,\cdots,\tilde{f}_q)$ be a holomorphic tuple such that 
$(\tilde{f}_1,\ldots,\tilde{f}_p,\tilde{f}_j)$ define a complete intersection on $V$ for all $j=p+1,\ldots,q$ 
and assume that
$\tilde{f}^{-1}(0)\supseteq f^{-1}(0)$ and that the $\chi_j$ vanish to infinite order at $0$. We may then replace 
the $\chi_j(|f_j|^2/\epsilon_j)$ in Theorem \ref{huvudsats} by $\chi_j(|\tilde{f}_j|^2/\epsilon_j)$ with the same 
conclusion; the constants $C$, $M$, and $\omega$ are unaffected.

\smallskip

Let $V$ be an analytic set of pure dimension $n$ defined in a neighborhood of $\bar{\B}\subset \C^N$;
put $\textrm{codim} (V)=P=N-n$. In a slightly smaller neighborhood of $\bar{\B}$ one can find a free resolution
\begin{equation}\label{upplosn}
0\to \mathcal{O}(E_{\nu}) \stackrel{F_{\nu}}{\longrightarrow} \cdots
\stackrel{F_2}{\longrightarrow} \mathcal{O}(E_1)
\stackrel{F_1}{\longrightarrow} \mathcal{O}(E_0)
\end{equation}
of the sheaf $\mathcal{O}_{\C^n}/\mathcal{J}_V$; the $E_j$ are trivial holomorphic vector bundles of ranks
$r_j$ with $r_0=1$ and the $F_j$ are $r_{j-1}\times r_j$-matrices of functions holomorphic in a neighborhood of 
$\bar{\B}$. Let $V_j$ be the set of points $z\in \B$ such that $F_j(z)$ does not have optimal rank.
These sets are analytic subsets of $\B$ that are independent of the choice of resolution 
of $\mathcal{O}_{\C^n}/\mathcal{J}_V$, i.e., invariants of $V$. Moreover, 
$V_j \subseteq V_{j-1} \subseteq \cdots \subseteq V_P = V_{P-1}=\cdots =V$ and since $V$ has pure dimension
in our case, Corollary 20.14 in \cite{Eis} implies that for $j>P$ one has
$V_j\subseteq V_{sing}$ and $\textrm{codim}(V_j)\geq j+1$. If $k=\textrm{max}\{j; V_j\neq \emptyset\}$
then one can find a new resolution with $\nu=k$. Let us assume that \eqref{upplosn} is such a minimal 
resolution. Let us also note that if $V$ is defined by a complete intersection, then the 
Koszul complex provides a minimal resolution.

Given Hermitian metrics on the $E_j$, Andersson-Wulcan, \cite{AW}, construct a current, $R^V$, 
whose annihilator sheaf is $\mathcal{J}_V$. This current has the form
\begin{equation*}
R^V=R_P^V +\cdots + R_{\nu}^V,
\end{equation*}
where the $R_j^V$ are $E_j$-valued $(0,j)$-currents with support in $V$ and with the Standard Extension Property
with respect to $V$. For some recent applications of $R^V$ we refer to \cite{AS} and \cite{ASS}.
If $\mathcal{J}_V$ is Cohen-Macaulay then $\nu=P$ and $R^V=R_P^V$ is $\debar$-closed, in fact, $R^V$ is then
a tuple of $\CH_V$-currents. In general, $R^V$ is not $\debar$-closed but satisfies instead $\nabla_F R^V=0$,
where $\nabla_F=\sum_j F_j - \debar$; in the case that \eqref{upplosn} is the Koszul complex
this is the $\nabla$-operator referred to in Section \ref{ickekarsektion}.  

Let $f_1,\ldots,f_q\in \mathcal{O}(\B)$ and assume that for each $\ell \geq P$
\begin{equation}\label{kommvillkor}
\textrm{codim}\, (V_{\ell}\cap \{f_1=\cdots = f_p=f_j=0\})\geq \ell +p+1, \quad \forall j\geq p+1.
\end{equation} 
Then one can define the product
\begin{equation}\label{AWprod}
\debar \frac{1}{f_1}\wedge \cdots \wedge \debar \frac{1}{f_p} \frac{1}{f_{p+1}}\cdots \frac{1}{f_q} \wedge R^V
\end{equation}
by an iterative procedure similar to the one described in Section \ref{CHsektion} and the product
has the natural suggestive commutation properties, see the end of Section 2 in \cite{AW2}. 
With our techniques one can prove
\begin{theorem}\label{AWsats}
Let $R^V$ be the current in $\B$ described above and let $(f_1,\ldots,f_q)$ be a tuple of holomorphic
functions in $\B$ that satisfies \eqref{kommvillkor}. Then, with the notation and the hypothesis on the 
$\chi$-functions from Theorem \ref{huvudsats},
we have
\begin{equation*}
\Big| \frac{\debar \chi_1^{\epsilon}\wedge \cdots \wedge \debar \chi_p^{\epsilon}
\chi_{p+1}^{\epsilon}\cdots \chi_q^{\epsilon}}{f_1 \cdots f_q} \wedge R^V. \varphi \hspace{5cm}
\end{equation*} 
\begin{equation*}
\hspace{3cm} -\debar\frac{1}{f_1}\wedge \cdots \wedge \debar\frac{1}{f_p}
\frac{1}{f_{p+1} \cdots f_q}
\wedge R^V. \varphi \Big| \leq C \|\varphi\|_M \epsilon^{\omega},
\end{equation*}
for $\oplus_j E_j^*$-valued test forms $\varphi$ in $\B$.
\end{theorem}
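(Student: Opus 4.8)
The plan is to deduce the statement from the machinery of Theorem~\ref{huvudsats} by exploiting the decomposition $R^V=R_P^V+\cdots+R_\nu^V$ together with the relation $\nabla_F R^V=0$. Since the assertion is local and $\C$-linear in both $R^V$ and the test form, the partition-of-unity reduction already carried out in the proof of Proposition~\ref{nyckelprop} lets me prove the estimate one component at a time, i.e.\ with $R^V$ replaced by a single $R_\ell^V$, $P\leq \ell\leq \nu$. Each $R_\ell^V$ is an $E_\ell$-valued $(0,\ell)$-current with support in $V_\ell$ and the Standard Extension Property with respect to $V$, and under a suitable modification it pulls back to a finite sum of elementary monomial residue and principal-value currents. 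The two features distinguishing this situation from Theorem~\ref{huvudsats} are that $R_\ell^V$ need not be $\debar$-closed and that the complete-intersection hypothesis is replaced by the codimension condition \eqref{kommvillkor}, read off for the support $V_\ell$.

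First I would establish the component-wise analogue of Proposition~\ref{nyckelprop}, namely
\begin{equation*}
\left|\frac{\debar \chi_1^{\epsilon}\wedge \cdots \wedge \debar
\chi_p^{\epsilon} \chi_{p+1}^{\epsilon} \cdots \chi_{q-1}^{\epsilon}}{f_1\cdots f_{q-1}}
\Big(\frac{\chi_q^{\epsilon}}{f_q} - \frac{1}{f_q}\Big)\wedge R_\ell^V.\,\varphi \right|
\leq C \|\varphi\|_{M}\, \epsilon_{q}^{\omega},
\end{equation*}
for each fixed $\ell$. Rather than invoking the Dolbeault--Lelong representation \eqref{DLrep} (which presupposes $\debar$-closedness), I pass directly to a resolution $\pi\colon\mathcal{V}\to V$ on which the $V_\ell$ together with all the divisors $\{\pi^*f_i=0\}$ have normal crossings and on which the pullback of $R_\ell^V$ is a finite sum of elementary monomial residue and principal-value currents. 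Then the polar-coordinate change, the Taylor-like expansion \eqref{super6}, and the H\"older estimate of Proposition~\ref{nyckelprop} go through verbatim. The one step that genuinely must be rechecked is the division lemma, Lemma~\ref{divlemma}: there the complete-intersection assumption forced certain pulled-back anti-holomorphic forms to vanish on coordinate subspaces, and this vanishing is now supplied instead by \eqref{kommvillkor}, which guarantees exactly the dimension drop $\textrm{codim}\,(V_\ell\cap\{f_1=\cdots=f_p=f_j=0\})\geq \ell+p+1$ needed for the relevant forms to have degree exceeding the dimension of the subvariety to which they are restricted.

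Next I would run the induction over $p$ and $q$ from Section~\ref{bevissektion} essentially unchanged, with one bookkeeping modification. The ``integration by parts'' that moves $\debar$ off a regularized factor now produces, in place of a $\CH_{V\cap S}$-term, the contribution of $\debar R_\ell^V$; unwinding $\nabla_F R^V=0$ degree by degree gives $\debar R_\ell^V=F_{\ell+1}R_{\ell+1}^V$, so this contribution involves the next component $R_{\ell+1}^V$ premultiplied by the holomorphic matrix $F_{\ell+1}$. The matrix is harmless, since holomorphic factors pass through the regularized residue and principal-value expressions up to smooth data, cf.\ \eqref{invarians}. Hence the recursion climbs through the finitely many components and terminates at the top component $R_\nu^V$, where $\debar R_\nu^V=0$ closes the induction; summing the finitely many component estimates and absorbing the $F_{\ell+1}$ into the constants yields the theorem.

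The main obstacle is precisely the failure of $\debar$-closedness: for a single $\CH_V$-current the induction stays within the Coleff--Herrera calculus, whereas here each integration by parts couples $R_\ell^V$ to $R_{\ell+1}^V$, forcing me to carry the estimate simultaneously over the whole tower of components. The delicate point is to verify that \eqref{kommvillkor} stays in force throughout this process under the two shifts occurring at each step --- the residue degree $p$ increases (intersecting with one more zero set) while the component index $\ell$ increases (raising the codimension of the relevant support $V_\ell$) --- so that the division lemma applies at every stage and uniformly over the finitely many charts of the resolution. Granting this compatibility, the remaining estimates are the same H\"older bounds established in Proposition~\ref{nyckelprop}.
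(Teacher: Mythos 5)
Your overall strategy is the one the paper follows: prove a componentwise analogue of Proposition~\ref{nyckelprop} for each $R_\ell^V$, let the codimension condition \eqref{kommvillkor} take over the role of the complete-intersection hypothesis in the division step, and then run the double induction over $p$ and $q$, using $\nabla_F R^V=0$ (i.e.\ $\debar R_\ell^V=F_{\ell+1}R_{\ell+1}^V$) so that each integration by parts climbs one step up the finite tower of components and terminates at $R_\nu^V$. That part of your plan is sound and matches the paper.

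There is, however, one concrete gap in your treatment of the components with $\ell>P$. You assert that each $R_\ell^V$ pulls back under a suitable modification to ``a finite sum of elementary monomial residue and principal-value currents'' and that the argument of Proposition~\ref{nyckelprop}, including the division lemma, then ``goes through verbatim.'' This is not the case: one has $R_\ell^V=a_\ell\wedge R_{\ell-1}^V$ where $\pi^*a_\ell$ is a \emph{smooth} $(0,1)$-form $b_\ell$ divided by a holomorphic monomial, and $b_\ell$ is not anti-holomorphic. Consequently the form $\Psi$ entering the division step acquires the smooth factor $b_\ell$, and Lemma~\ref{divlemma} as stated cannot be applied: its construction (restricting an anti-holomorphic form to the coordinate subspaces $\{x_{i_1}=\cdots=x_{i_\ell}=0\}$ and taking an inclusion--exclusion sum) produces an anti-holomorphic $\xi$ whose coefficients are \emph{exactly} divisible by the $\bar x_i$, and both the construction and the conclusion break down for merely smooth data. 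What is needed instead is the modified division lemma the paper sketches: replace the exact restrictions by finite-order Taylor expansions $\Psi_J^{r}$ in the variables $x_J$, which yields only that $(d\bar x_i/\bar x_i)\wedge\xi$ is $C^{r}$ for a suitably large $r$ --- still sufficient for the H\"older estimates, but a genuinely weaker statement with a different proof --- and then use \eqref{kommvillkor} at level $\ell$ to show that the discarded piece $\xi'$ is annihilated by $\debar\chi_1^{\epsilon}\wedge\cdots\wedge\debar\chi_m^{\epsilon}$. Your instinct that \eqref{kommvillkor} supplies the vanishing is correct, but the claim of a verbatim transfer hides exactly the point where new work is required. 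Your closing worry about \eqref{kommvillkor} remaining in force along the recursion is handled by the fact that the hypothesis is imposed for all $\ell\geq P$ and all $j\geq p+1$ simultaneously, together with the already established commutation properties of the product \eqref{AWprod}.
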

To prove this we need Proposition \ref{nyckelprop} with $\mu$ replaced by $R^V_j$, $j=P,\ldots,\nu$.
Then Theorem \ref{AWsats} follows, e.g., by a double induction over $p$ and $q$ and using that 
we already know that the product \eqref{AWprod} has nice commutation properties.
In the induction steps one uses that the involved $\debar$-operators may be replaced by 
$-\nabla_F$-operators.

Let us indicate how to prove the required analogue of Proposition \ref{nyckelprop}.
First of all, $R^V_P$ has an integral representation similar to \eqref{DLrep}; see, e.g.,
Proposition 2.1 in \cite{ASS} for details. Using this, the proof of Proposition \ref{nyckelprop} goes 
through with $\mu$ replaced by $R^V_P$. For $R^V_j$ with $j>P$ one uses that $R_j^V=a_j\wedge R^V_{j-1}$,
where $a_j$ is a $\textrm{Hom}(E_{j-1}, E_j)$-valued $(0,1)$-form that is smooth outside $V_j$ and moreover
has the property that in a suitable resolution $\pi \colon \mathcal{V} \to V$, $\pi^* a_j$ is a smooth
$(0,1)$-form $b_j$ divided by a holomorphic monomial; again, see Proposition 2.1 in \cite{ASS}.

Now, for simplicity, consider $R^V_{P+1}=a_{P+1}\wedge R^V_P$ and choose such a resolution
$\pi \colon \mathcal{V} \to V$ that, apart from having the properties in the proof of Proposition 
\ref{nyckelprop}, also is such that the preimage of $V_{P+1}$ is a normal crossings divisor.
The proof then goes through if we establish a somewhat more general division lemma, namely
(see the proof of Proposition \ref{nyckelprop} for the notation):

\smallskip

{\em One can replace 
$\Psi:= d\bar{u}_{m+1}\wedge \cdots \wedge d\bar{u}_p\wedge b_{P+1}\wedge \pi^*(d\bar{z}_I)$,
$|I|=n-p-1$, by a smooth form $\xi$, without affecting the integral (corresponding to \eqref{steg4}),
such that $(d\bar{x}_i/\bar{x}_i)\wedge \xi$ is $C^{r}$-smooth for an appropriate large $r$
and all $i\in \mathcal{K}$.}

\smallskip

This can be achieved as follows. Put
\begin{equation*}
\xi'=\sum_{J\subseteq \mathcal{K}}(-1)^{|J|+1}\Psi_J^{r}, \quad \textrm{where} \quad
\Psi_J^{r}:=\sum_{j\in J, k_j=0}^{r +1}
\frac{\partial^{|k|} \Psi}{\partial x_J^k}\Big|_{x_J=0} \cdot \frac{x_J^k}{k!},
\end{equation*}
cf., the beginning of the proof of Lemma \ref{divlemma}. One verifies by induction that
$\xi:=\Psi-\xi'$ satisfies $(d\bar{x}_i/\bar{x}_i)\wedge \xi \in C^{r}$ for all $i\in \mathcal{K}$.
Moreover, using \eqref{kommvillkor} for $\ell =P+1$ and the technique of the proof of Lemma
\ref{divlemma} one shows that 
$\debar \chi_1^{\epsilon}\wedge \cdots \wedge \debar \chi_m^{\epsilon}\wedge \xi'=0$
so that $\Psi$ may be replaced by $\xi$ without affecting the integral.

\end{document}